\theoremstyle{definition}%
\newtheorem{theorem}{Theorem}[section]
\newtheorem{lemma}{Lemma}[section]
\newtheorem{assumption}{Assumption}[section]
\theoremstyle{definition}
\newtheorem{example}{Example}[section]
\newtheorem{remark}{Remark}%
\title{Numerical approximations to invariant measures of hybrid stochastic differential equations with superlinear coefficients via the backward Euler-Maruyama method}
\date{}
\author{
Wei Liu\\
Department of Mathematics\\
Shanghai Normal University\\
100 Guilin Road, Shanghai, 200234, China\\
\texttt{weiliu@shnu.edu.cn} 
\And
Jie Xu\\
Department of Mathematics\\
Shanghai Normal University\\
100 Guilin Road, Shanghai, 200234, China\\
\texttt{jiexu\_math@hotmail.com} 
}
\begin{document}
\maketitle

\begin{abstract}
For stochastic differential equations (SDEs) with Markovian switching, whose drift and diffusion coefficients are allowed to contain superlinear terms, the backward Euler-Maruyama (BEM) method is proposed to approximate the invariant measure. The existence and uniqueness of the invariant measure of the numerical solution generated by the BEM method is proved. Then the convergence of the numerical invariant measure to its underlying counterpart is shown. Those results obtained in this work release the requirement of the global Lipschitz condition on the diffusion coefficient in [X. Li et al. SIAM J. Numer. Anal. 56(3)(2018), pp. 1435-1455] and can also be regarded as a non-trivial extension of [W. Liu et al. Appl. Numer. Math. 184(2023), pp. 137-150] to the case of hybrid SDEs.
\end{abstract}

\keywords{stochastic differential equations with Markovian switching \and the backward Euler-Maruyama method \and invariant measure \and superlinear drift and diffusion coefficients}

\section{Introduction}
Stochastic differential equations (SDEs) with Markovian switching, sometimes also called hybrid SDEs, are a set of SDEs may switch from one to another controlled by a Markov process as the time goes on. Detailed introductions and investigations of this type of stochastic systems can be found in monographs \cite{MaoYuan2006Book,YinZhu2010Book}.
\par
Such a hybrid stochastic system has broad applications in ecology \cite{BaoShao2016}, epidemiology \cite{GrayGreenhalghMaoPan2012}, finance \cite{Liu2010}, immunology \cite{DengLiu2020}, and many other areas where uncertain phenomena need to be modeled.
\par
To apply hybrid SDEs in practice, numerical simulations of them are necessary, as it is hard to obtain explicit forms of solutions to most hybrid SDEs. Even for some rare cases that explicit expressions of the true solutions are achievable, simulations still have to be done to sample from the solutions, as various follow-up estimations and tests may be conducted. 
\par
When we try to simulate the hybrid SDEs, the choices of numerical methods are essential, as improper numerical methods could lead to that numerical solutions do not preserve properties of true solutions \cite{HighamMaoYuan2007,HutzenthalerJentzenKloeden2011}.
\par
Therefore, careful numerical analyses of numerical methods for SDEs with Markovian switching have been attracting lots of attention in recent years. The finite time convergence and the asymptotic behaviors are two main topics for numerical methods. Since our focus in this work is numerical invariant measures, in the next three paragraphs we give more detailed literature reviews of the asymptotic behaviors of numerical methods as they are more relevant. For the studies of the finite time convergence, we refer the readers to \cite{KumarKumar2020,MaoYuanYin2007,NguyenNguyenHoangYin2018,NguyenHoangNguyenYin2017,ZhangSongLiu2023} and reference therein.
\par
Among all kinds of asymptotic behaviors of numerical methods, numerical invariant measure is one of most interesting properties that researchers have been investigating. Briefly speaking, under some conditions the underlying equation has a unique invariant measure, which however rarely has an explicit expression. Hence, finding some proper numerical solution that also has a unique invariant measure and proving the numerical one to be convergent to the underlying counterpart are essential. 
\par
when both the drift and diffusion coefficients of SDEs with Markovian switching satisfy the global Lipschitz condition, the existence and uniqueness of the invariant measure for the numerical solution generated by the Euler-Maruyama (EM) method is proved in \cite{MaoYuanYin2005} and the convergence of the numerical invariant measure to the underlying one is shown in \cite{YuanMao2005}. In addition, the numerical invariant measure of the EM method for SDEs with different types of Markovian switching are discussed in \cite{BaoShaoYuan2016}. Meanwhile, the numerical invariant measure of the EM method for the case of the state-dependent switching is studied in \cite{Shao2018}. 
\par
When some superlinear terms are allowed in coefficients, the EM may not work. Therefore, the alternative numerical methods are required. The modified explicit Euler type methods and the implicit methods are two main candidates. The truncated EM method, as a modified Euler type method, is proposed for approximating the invariant measures of hybrid SDEs when both the drift and diffusion coefficients are allowed to contain some superlinear terms \cite{YangLi2018}. Simultaneously, the backward Euler-Maruyama (BEM) method, as an implicit method, is investigated to approximate the invariant measure of hybrid SDEs, whose drift coefficient may contain superlinear terms but diffusion coefficient is still required to obey the linear growth condition \cite{LiMaYangYuan2018}. However, when the diffusion coefficient also contains some superlinear terms, it is still not clear whether the BEM method is capable to approximate the invariant measure of hybrid SDEs.
\par
In this work, we give the positive answer with rigorous numerical analysis. For hybrid SDEs with the superlinearly growing drift and diffusion coefficients, we prove the existence and uniqueness of the invariant measure of the numerical solution generated by the BEM method firstly in Section \ref{subsec:nim}. Then, we show the convergence of the numerical invariant measure to the underlying counterpart in Section \ref{subsec:con}. Numerical simulations are provided to demonstrate our theoretical results in Section \ref{sec:ns}.
\par
It is not too late to mention at the end of this section that numerical approximations to invariant measures of SDEs without the switching facility embedded have also been attracting a considerable amount of attention in recent years \cite{ChenGan2023,JiangWengLiu2020,LiuMao2015,LiuMaoWu2023,LiuLiu2025,PangWangWu2024}. Compared with the case of SDEs without Markovian switching, studies on SDEs with  Markovian switching need to take the effects of the Markov process controlling the switching facility into consideration, which is nontrivial and brings more difficulties in the analysis. One of interesting phenomena observed from the case of SDEs with  Markovian switching is that the whole system could have a unique invariant measure even if some of the subsystems may not have invariant measures.

\section{Mathematical preliminaries}\label{sec:MathPre}
This section contains basic settings, notations and useful lemmas for the rest of the paper.
\par
Throughout this paper, we work with the complete probability space $(\Omega, \mathcal{F}, \mathbb{P})$. Let $|a|$ denote the Euclidean norm if $a$ is a vector and the Hilbert-Schmidt norm if $a$ is a matrix. The inner product of vectors $a$ and $b$ in the Euclidean space is denoted by $\langle a,b\rangle$. The larger one between scalars $a$ and $b$ is denoted by $a\vee b$ , and the smaller one is denoted by $a\wedge b$. For a vector or matrix $a$, $a^T$ denotes its transpose. For a scalar $a>0$, $[a]$ denotes the largest integer that is smaller than $a$. $\mathbb{N}_+$ denotes the set of non-negative integers. Let $\mathbb{I}_A$ be the indicator function for set $A$.
\par
Let $B(t) = (B_1(t), \cdots, B_d(t))^T$ with $t \geq 0$ be a $d$-dimensional Brownian motion defined on $(\Omega, \mathcal{F}, \mathbb{P})$. Let $r(t)$ with $t \geq 0$ be a right-continuous Markov chain on $(\Omega, \mathcal{F}, \mathbb{P})$ taking values in a finite state space $\mathbb{S} = \{1,2, \cdots, N \}$ with the generator $\Gamma=(\gamma_{ij})_{N \times N}$ given by
\begin{equation*}
\mathbb{P}\{r(t + h) = j \mid r(t) = i\} = 
\begin{cases} 
\gamma_{ij} h + o(h) & \text{if}~ i \neq j, \\
1 + \gamma_{ii} h + o(h) & \text{if}~ i = j,
\end{cases}
\end{equation*}
where $h>0$ is sufficiently small and $\gamma_{ij} \geq 0$ is the transition rate from $i$ to $j$ if $i \neq j$ with $\gamma_{ii}=-\Sigma_{i \neq j} \gamma_{ij}$. In addition, we assume that the transition rate matrix $\Gamma$ is irreducible and conservative. So, the Markov chain $\{r(t)\}_{t \geq 0}$ has a unique stationary distribution $ \mu := (\mu_1, \mu_2, \ldots, \mu_N)$, which can be determined by solving the linear equation
$$
\mu \Gamma = 0 \quad \text{subject to} \quad \sum_{j=1}^N \mu_j = 1~\text{with}~\mu_j \geq 0.
$$

Denote the natural filtration by $ \{\mathcal{F}_t\}_{t \geq 0}$, i.e. $\mathcal{F}_t$ is the $\sigma$-algebra generated by $\{B(s),r(s)\}_{0 \leq s \leq t}$. It is clear that both $B(t)$ and $r(t)$ are $\mathcal{F}_t$-adapted.
\par \noindent
Throughout this paper, we assume that $B(t)$ and $r(t)$ are independent.
\par
We consider the $n$-dimensional SDEs with Markovian switching
\begin{equation}
    \mathrm{d}X(t) = f(X(t), r(t)) \mathrm{d}t + g(X(t), r(t)) \mathrm{d}B(t), \quad t \geq 0,
    \label{theSDE}
\end{equation}
with initial data $X(0) = x_0 \in \mathbb{R}^n$ and $r(0) = i_0 \in \mathbb{S}$, where $f : \mathbb{R}^n \times \mathbb{S} \to \mathbb{R}^n$ and $g : \mathbb{R}^n \times \mathbb{S} \to \mathbb{R}^{n \times d}$. Now we impose several assumptions on f and g as follows.
\begin{assumption}\label{PolyCon}
There exists $ a_i > 0$ and $ q \geq 2$  such that 
\begin{align*}
\left|f(x, i) - f(y, i)\right|^2 \lor \left|g(x, i) - g(y, i)\right|^2 \leq a_i( 1 + \left| x \right| ^{q-2}+ \left| y \right|^{q-2}) \left| x - y\right|^2,
\end{align*}
for all $x, y \in \mathbb{R}^n$, $i \in \mathbb{S}$.
\end{assumption}
From Assumption \ref{PolyCon}, one can straightforwardly derive that for all $x \in \mathbb{R}^n$ and $i \in \mathbb{S}$
\begin{align}
\left| f(x,i) \right| ^2\lor \left| g(x,i) \right|^2\leq b_i \left( 1 + \lvert x \rvert^q \right),
\label{eq:13}
\end{align}
where $b_i =4 a_i + 2\left(\left|f(0, i)\right|^2 \lor \left|g(0, i)\right|^2 \right)$.
\begin{assumption}\label{KhCon}
There exist $ n_i \in \mathbb{R}$  and $l_1 > 4$ such that
\begin{align*}
&2\langle x - y, f(x, i) - f(y, i)\rangle + l_1\lvert g(x, i) - g(y, i)\rvert^2\leq n_i\lvert x - y\rvert^2,
\end{align*}
for all $x, y\in \mathbb{R}^n$, $i\in \mathbb{S}$. 
\end{assumption}

It is not hard to derived from Assumption \ref{KhCon} that 
\begin{align} \label{Khconext}
2\langle x, f(x, i)\rangle + l_2\lvert g(x, i)\rvert^2\leq m + ( 1 + n_i)\lvert x\rvert^2,
\end{align}
where $l_2 = l_1 - 2$ and $m = \max_{i \in \mathbb{S}} \{|f(0,i)|^2 + \frac{l_1l_2}{2}|g(0,i)^2|\}$.

\par
Then, we present our requirement on the switching process.
\begin{assumption}\label{invCon}
For the unique stationary distribution $\mu$, there exist constants $\lambda_1 >0$ and $\lambda_2 >0$ such that
\begin{equation*}
\sum_{{j \in \mathbb{S}}}^{} \mu_j \frac{n_j+1}{1 - (n_j+1)/(n_M+2)}  = - \lambda_1 \quad \text{and} \quad \sum_{{j \in \mathbb{S}}}^{} \mu_j \frac{n_j}{1 - n_j/(n_M+2)}  = - \lambda_2, 
\end{equation*}
where  
$n_M = \max_{i \in \mathbb{S}}{|n_i|}$.
\end{assumption}


To emphasize the initial value, $(X^{x_0,i_0}_t, r^{i_0}_t)$ is used to denote the pair of the solution process and the switching process at time $t$ that starts from $(x_0, i_0)$ at time $0$. 
\par
For any $ p \in (0, 1) $, define a metric on $ \mathbb{R}^n \times \mathbb{S} $ by  
\begin{align*}
d_p((x, i), (y, j)) = |x - y|^p + \mathbb{I}_{\{i \neq j\}}, \quad (x, i),~(y, j) \in \mathbb{R}^n \times \mathbb{S}.
\end{align*}

The family of all probability measures on $ \mathbb{R}^n \times \mathbb{S} $ is denoted by $ \mathcal{P}(\mathbb{R}^n \times \mathbb{S}) $. The family of all Borel sets in $\mathbb{R}^n$ is denoted by $\mathcal{B}(\mathbb{R}^n)$. The $p$-Wasserstein distance between $u, v \in \mathcal{P}(\mathbb{R}^n \times \mathbb{S})$ for any $p \in (0, 1)$ is defined by
$$
\mathbb{W}_p(u, v) = \inf_{\pi \in \mathcal{C}(u, v)} \int_{(\mathbb{R}^n \times \mathbb{S}) \times ( \mathbb{R}^n \times \mathbb{S})} d_p(X, Y)   \nu (dX, dY),
$$
where $\mathcal{C}(u, v)$ denotes the set of all couplings of $u$ and $v$. The transition probability kernel of $(X^{x_0,i_0}_t, r^{i_0}_t)$ is denoted by $\mathbf{P}_t((x_0, i_0), dx \times \{i\})$, which is a time homogeneous Markov process. Sometimes, we also use $\delta_{(x_0,i_0)}\mathbf{P}_t$ to represents the transition probability kernel. A probability measure $\pi \in \mathcal{P}(\mathbb{R}^n \times \mathbb{S})$ is called an invariant measure of $(X^{x_0,i_0}_t, r^{i_0}_t)$ if
$$
\pi(A \times \{j\}) = \sum_{i=1}^{N} \int_{\mathbb{R}^n} \mathbf{P}_t((x, i), A \times \{j\}) \pi(\mathrm{d}x \times \{i\}) 
\quad \forall t \geq 0,\ A \in \mathcal{B}(\mathbb{R}^n),\ j \in \mathbb{S},
$$
holds. 
\par



To construct the BEM method for \eqref{theSDE}, we need to discretize $r(t)$ firstly. As a classical result, we can generate the one-step transition probability matrix with the step-size $\Delta$ by $(P^{\Delta}_{ij})_{N\times N}=\exp(\Gamma\Delta)$. Then starting from the initial datum $r_0=i_0$, with the help of $(P^{\Delta}_{ij})_{N\times N}$ we could simulate a path $\{r_j\}_{j\in \mathbb{N}_+}$ of $r(t)$ by following the classical way to simulate sample paths of discrete time Markov chain. We refer the readers to page 112 of \cite{MaoYuan2006Book} for the detailed formulation of this process. 
\par
Now the BEM method for the SDE with Markovian switching \eqref{theSDE} is defined as
\begin{equation}
X_{k + 1} = X_k + f(X_{k + 1}, r_{k+1})\Delta + g(X_k, r_k) \Delta B_k, \quad k \geq 0
\label{eq:2}
\end{equation}
with initial data $X_0 = x_0$ and $r_0 = i_0$, where $\Delta>0$ is the constant time step and $\Delta B_k$ is the Brownian motion increment calculated by $B((k + 1)\Delta) - B(k\Delta)$. 

\begin{remark}
    It should be mentioned that in \eqref{eq:2} we use $r_{k+1}$ instead of $r_k$ in the drift part, which is different from most existing works. This setting leads to the fact that $X_{k+1}$ is dependent on $r_{k+1}$. So in our proofs in Section \ref{sec:MRs}, we need to rely on techniques of the conditional expectation quite heavily.
\end{remark}
Hence, we define two filtrations
$$\mathcal{G}_k:= \sigma \bigg(\{r(i\Delta)\}_{i \geq 0}, \{B(j\Delta)\}_{0\leq j\leq k}\bigg) \quad \text{and} \quad \mathcal{F}^r_k:=\sigma \bigg(\{r(j\Delta)\}_{0\leq j\leq k}\bigg),$$ 
which are important in proofs in the next section. Briefly speaking, $\mathcal{G}_k$ contains the information of the whole Markov process but the information of the Brownian motion up to the $k$th step. While, $\mathcal{F}^r_k$ only contains the information of the Markov process up to the the $k$th step.      

\begin{lemma}\label{lemma:WD}
Let Assumption \ref{KhCon} hold. If $\Delta \in (0, 1/(n_M+2))$, then the BEM method \eqref{eq:2} is well defined.
\end{lemma}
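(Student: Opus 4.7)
The BEM update \eqref{eq:2} defines $X_{k+1}$ implicitly via
\[
 x - \Delta f(x, r_{k+1}) = X_k + g(X_k, r_k)\Delta B_k.
\]
Since the right-hand side and $r_{k+1}$ are already determined at this stage (being $\mathcal{G}_k \vee \sigma(r_{k+1})$-measurable), the scheme is well defined iff, pathwise, for each $i \in \mathbb{S}$ the map $F_i:\mathbb{R}^n\to\mathbb{R}^n$ given by $F_i(x) = x - \Delta f(x,i)$ is a bijection. My plan is to verify this by a standard monotone-operator argument in finite dimensions and then take $X_{k+1} = F_{r_{k+1}}^{-1}\bigl(X_k + g(X_k,r_k)\Delta B_k\bigr)$.

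\textbf{Strong monotonicity.} Dropping the nonnegative term $l_1|g(x,i)-g(y,i)|^2$ in Assumption \ref{KhCon} yields $2\langle x-y,\, f(x,i)-f(y,i)\rangle \le n_i|x-y|^2$, so that
\[
\langle x-y,\, F_i(x)-F_i(y)\rangle \ge \Bigl(1 - \tfrac{n_i\Delta}{2}\Bigr)|x-y|^2.
\]
Using $n_i \le n_M$ together with $\Delta < 1/(n_M+2)$ gives $1 - n_i\Delta/2 \ge 1 - n_M/(2(n_M+2)) > 1/2$, so $F_i$ is strongly monotone with constant at least $1/2$, uniformly in $i$.

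\textbf{Bijection and conclusion.} Strong monotonicity immediately delivers injectivity. For surjectivity, continuity of $f(\cdot,i)$ from Assumption \ref{PolyCon} makes $F_i$ continuous, while strong monotonicity furnishes coercivity through $\langle F_i(x),x\rangle \ge \tfrac12 |x|^2 - |F_i(0)|\,|x|$, which forces $|F_i(x)|\to\infty$ as $|x|\to\infty$. A continuous, coercive, strongly monotone map from $\mathbb{R}^n$ into $\mathbb{R}^n$ is a homeomorphism onto $\mathbb{R}^n$ (by the finite-dimensional Minty-Browder theorem, or a short Brouwer-degree argument). Hence $X_{k+1}$ exists and is unique for every $\omega$, and the scheme is well defined.

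\textbf{Main obstacle.} The only nontrivial step is the surjectivity claim: a direct Picard contraction is unavailable because $f$ grows superlinearly, so one cannot iterate $x \mapsto y + \Delta f(x,i)$. The crux is therefore recognizing that the step-size restriction $\Delta < 1/(n_M+2)$ is precisely what promotes the one-sided bound in Assumption \ref{KhCon} into strong monotonicity of $F_i$ uniformly over $i\in\mathbb{S}$, after which the Minty-Browder theorem takes care of the bijection.
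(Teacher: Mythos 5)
Your proposal is correct and follows essentially the same route as the paper: both reduce the one-sided condition of Assumption \ref{KhCon} (discarding the nonnegative diffusion term) to strong monotonicity of $x \mapsto x - \Delta f(x,i)$ under the step-size restriction, and then invoke invertibility of a continuous strongly monotone map on $\mathbb{R}^n$. The only difference is that you spell out the surjectivity step via coercivity and Minty--Browder, whereas the paper simply asserts that the monotone map has an inverse; your version is the more complete one.
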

\begin{proof}
For any $k \geq 0$, rewrite \eqref{eq:2} into
\begin{align*}
X_{k + 1} - f(X_{k + 1}, r_{k+1})\Delta =X_k + g(X_k, r_k) \Delta B_k.
\end{align*}
Define $G_i(u) = u - f(u,i)\Delta$ for $u\in \mathbb{R}^n, i\in \mathbb{S}$. Due to Assumption \ref{KhCon}, we have 
\begin{align*}
2\langle u_1 - u_2, f(u_1, i) - f(u_2, i)\rangle \leq n_i\lvert u_1 - u_2\rvert^2,
\end{align*}
for all $u_1, u_2\in \mathbb{R}^n, i \in \mathbb{S}$. Then we can get
\begin{align*}
\langle u_1 - u_2, G_i(u_1) - G_i(u_2)\rangle \geq \left(1-\frac{n_i \Delta}{2}\right)\lvert u_1 - u_2\rvert^2.
\end{align*}
By choosing $\Delta \in (0, 1/(n_M+2))$, we have $1-n_i\Delta/2 > 0$. So $G_i(\cdot)$ is monotonic. Then $G_i(\cdot)$ has a unique inverse function $G_i^{-1}(\cdot): \mathbb{R}^n \to \mathbb{R}^n$ such that for any $k \geq 0$
\begin{align*}
X_{k + 1} = G_{r_{k+1}}^{-1}\left(X_k + g(X_k,r_k)\Delta B_k\right).
\end{align*}
Therefore, for any $k \geq 0$ the unique $X_{k + 1}$ can be found provided that $X_k + g(X_k,r_k)\Delta B_k$ and $r_{k+1}$ are given. The proof is completed.
\end{proof}
For the rest of the paper, the step size $\Delta$ is always assumed to satisfy $\Delta \in (0, 1/(n_M+2))$. For convenience, $C$ is regarded as a generic positive constant, whose value may be different from line to line, but is independent of $k$ and $\Delta$. 
\begin{lemma}
$\{(X_k,r_k)\}_{k \geq 0}$ is a homogeneous Markov process with transition probability kernel
\begin{equation*}
\mathbf{P}^{\Delta}_{k}((x, i), A \times \{j\}):= \mathbb{P} \left( (X_k,r_k)\in A \times \{j\} \mid (X_0,r_0) = (x,i)  \right)
\end{equation*}
for $A \in \mathcal{B}(\mathbb{R}^n)$ and $ j \in \mathbb{S}$.
\end{lemma}
\begin{proof}
For any $k \geq 0$, denote 
$$ Z_k := (X_k, r_k) \quad \text{and} \quad  \mathcal{F}_k := \sigma \left(\{r(j\Delta)\}_{0\leq j\leq k}, \{B(j\Delta)\}_{0\leq j\leq k}\right). $$
For any $A \in \mathcal{B}(\mathbb{R}^n)$ and $j \in  \mathbb{S}$, by the product rule we have
\begin{align}
\mathbb{P}(Z_{k+1} \in A\times \{j\} \mid \mathcal{F}_k) 
&= \mathbb{P}(X_{k+1} \in A, r_{k+1} = j \mid \mathcal{F}_k)  \notag \\
&= \mathbb{P}(r_{k+1} = j \mid \mathcal{F}_k) \times \mathbb{P}(X_{k+1} \in A \mid r_{k+1} = j, \mathcal{F}_k).
\label{eq:pr}
\end{align}
As $\{r_k\}_{k  \in \mathbb{N}_+}$ is a Markov process as defined, we have
\begin{equation}\label{eq:r}
\mathbb{P}(r_{k+1} = j \mid \mathcal{F}_k) = \mathbb{P}(r_{k+1} = j \mid Z_k).
\end{equation}
From the proof of Lemma \ref{lemma:WD}, the BEM method \eqref{eq:2} can be rewritten as
\begin{align*}
X_{k + 1} = G_{r_{k+1}}^{-1}\left(X_k + g(X_k,r_k)\Delta B_k\right).
\end{align*}
Since $\Delta B_k= B((k+1) \Delta) - B(k \Delta)$, $\Delta B_k$ is independent of $\mathcal{F}_k$. In addition, $Z_k=(X_k,r_k)$ is $\mathcal{F}_k$-measurable. By a classical result (see for example, Lemma 9.2 at page 87 of \cite{Mao2008book}), we can have 
\begin{align}
\mathbb{P}(X_{k+1} \in A \mid r_{k+1} = j, \mathcal{F}_k) 
&= \mathbb{P}(G_{j}^{-1}\left(X_k + g(X_k,r_k)\Delta B_k\right)  \in A \mid \mathcal{F}_k) \notag \\
&= \mathbb{E} \left(\mathbb{I}_{A} \left(  G_{j}^{-1}\left(X_k + g(X_k,r_k)\Delta B_k\right)\right) \mid \mathcal{F}_k \right) \notag \\
&= \mathbb{E} \left(\mathbb{I}_{A} \left(  G_{j}^{-1}\left(x + g(x,i)\Delta B_k\right)\right) \right)_{x=X_k,i=r_k} \notag \\
&= \mathbb{P} \left(  G_{j}^{-1}\left(x + g(x,i)\Delta B_k  \right) \in A \right)_{x=X_k,i=r_k} \notag \\
&= \mathbb{P} \left(X_{k+1} \in A   \mid  r_{k+1} = j, Z_k \right).
\label{eq:X}
\end{align}
Substituting \eqref{eq:r} and \eqref{eq:X} into \eqref{eq:pr}, we have
\begin{align*}
&~~~~ \mathbb{P}(Z_{k+1} \in A\times \{j\} \mid \mathcal{F}_k) \\
&= \mathbb{P}(r_{k+1} = j \mid Z_k) \times \mathbb{P}\left( X_{k+1} \in A \mid  r_{k+1} = j, Z_k \right) \\
&= \mathbb{P} (Z_{k+1} \in A\times \{j\} \mid Z_k).
\end{align*}
The proof is completed.
\end{proof}

A probability measure $\pi^{\Delta} \in \mathcal{P}(\mathbb{R}^n \times \mathbb{S})$ is called an invariant measure of $(X^{x_0,i_0}_k, r^{i_0}_k)$ if
$$
\pi^{\Delta}(A \times \{j\}) = \sum_{i=1}^{N} \int_{\mathbb{R}^n} \mathbf{P}^{\Delta}_{k}((x, i), A \times \{j\}) \pi^{\Delta}(\mathrm{d}x \times \{i\}) 
\quad \forall k \geq 0,\ A \in \mathcal{B}(\mathbb{R}^n),\ j \in \mathbb{S},
$$
holds. Sometimes we denote $\mathbf{P}^{\Delta}_{k}((x_0, i_0), dx \times \{i\})$ by $\delta_{(x_0,i_0)}\mathbf{P}^{\Delta}_{k}$.

Before the end of this section, we highlight the following equality, which will be employed frequently in proofs in Section \ref{sec:MRs}. For any $a, b \in \mathbb{R}^n$, the equality
\begin{equation} \label{eq:3}
\lvert b \rvert^2 - \lvert a \rvert^2 + \lvert b - a \rvert^2 = 2 \langle b - a, b \rangle
\end{equation}
holds.

\section{Main results}\label{sec:MRs}
We divide this section into two parts. In Section \ref{subsec:nim}, the existence and uniqueness of the invariant measure of the numerical solution generated by the BEM method is proved. In Section \ref{subsec:con}, the convergence of the numerical invariant measure to the underlying counterpart is discussed.
 
\subsection{Existence and uniqueness of numerical invariant measure}\label{subsec:nim}

To obtain the existence and uniqueness of the invariant measure, we need to prepare two lemmas.
\par
Lemma \ref{lem:1} below states the numerical solution is uniformly bounded by some constant that is independent of the time variable.
\begin{lemma}\label{lem:1}
Let Assumptions \ref{KhCon} and \ref{invCon} hold. For any $k \geq 0$ the numerical solution \eqref{eq:2} satisfies 
\begin{align}
\mathbb{E}(\lvert X_k\rvert^2)\leq C \left( \lvert x_0 \rvert^2 +  \lvert g(x_0, i_0) \rvert^2 \right).
\label{eq:2.1}
\end{align}
\end{lemma}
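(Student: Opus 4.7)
The plan is to derive a one-step recursion for a conditional second moment via the Khasminskii-type bound \eqref{Khconext}, unroll it, and use Assumption~\ref{invCon} to tame the resulting product of random factors driven by the Markov chain.

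\emph{Step 1 (one-step inequality).} Rewrite \eqref{eq:2} as $X_{k+1}-(X_k + g(X_k,r_k)\Delta B_k) = \Delta f(X_{k+1}, r_{k+1})$ and apply identity \eqref{eq:3} with $b = X_{k+1}$ and $a = X_k + g(X_k,r_k)\Delta B_k$. Discarding the nonnegative square $\Delta^2|f(X_{k+1},r_{k+1})|^2$ and invoking \eqref{Khconext} on $2\Delta\langle X_{k+1}, f(X_{k+1}, r_{k+1})\rangle$ yields
\[ (1-(1+n_{r_{k+1}})\Delta)\,|X_{k+1}|^2 + l_2\Delta\,|g(X_{k+1}, r_{k+1})|^2 \leq \bigl|X_k + g(X_k,r_k)\Delta B_k\bigr|^2 + m\Delta. \]

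\emph{Step 2 (conditioning and iteration).} Take conditional expectation given the chain $\sigma$-algebra $\mathcal{F}^r_\infty:=\sigma(\{r(j\Delta)\}_{j\geq 0})$. Since $r_{k+1}$ is $\mathcal{F}^r_\infty$-measurable while $\Delta B_k$ is independent of $\mathcal{G}_k\supseteq\sigma(\mathcal{F}^r_\infty, X_k)$, the Brownian cross-term vanishes. Writing $\phi_k:=\mathbb{E}[|X_k|^2|\mathcal{F}^r_\infty]$ and $\psi_k:=\Delta\mathbb{E}[|g(X_k,r_k)|^2|\mathcal{F}^r_\infty]$ gives
\[ (1-(1+n_{r_{k+1}})\Delta)\phi_{k+1} + l_2 \psi_{k+1} \leq \phi_k + \psi_k + m\Delta. \]
A direct calculation using $l_1>4$ and $\Delta<1/(n_M+2)$ shows $l_2 > 2 > 1-(1+n_{r_{k+1}})\Delta > 0$. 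Replacing the coefficient $l_2$ of $\psi_{k+1}$ by the smaller $1-(1+n_{r_{k+1}})\Delta$ keeps the inequality valid and factors the LHS, so dividing through produces
\[ u_{k+1} \leq a_{k+1}(u_k + m\Delta),\qquad u_k := \phi_k + \psi_k,\qquad a_j := \frac{1}{1-(1+n_{r_j})\Delta}, \]
which unrolls to $u_k \leq u_0 \prod_{j=1}^k a_j + m\Delta \sum_{j=1}^k \prod_{l=j}^k a_l$ with $u_0 \leq |x_0|^2 + |g(x_0,i_0)|^2$.

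\emph{Step 3 (main obstacle: bounding the product).} The essential difficulty is to bound $\mathbb{E}\prod_{j=1}^k a_j$ uniformly in $k$, as the $a_j$ are correlated through $\{r_j\}$. I would identify the expectation with $\mathbf{e}_{i_0}^T (M^\Delta)^k\mathbf{1}$, where $M^\Delta_{ij}:=p^\Delta_{ij}/(1-(1+n_j)\Delta)$ is a nonnegative irreducible matrix, and use Perron-Frobenius to reduce the problem to showing $\rho(M^\Delta)\leq 1-\alpha\Delta$ for some $\alpha>0$. By the monotonicity of $\Delta\mapsto(1+n_j)/(1-(1+n_j)\Delta)$, Assumption~\ref{invCon}, which is stated at the endpoint $\Delta^*=1/(n_M+2)$, promotes to $\sum_j\mu_j(1+n_j)/(1-(1+n_j)\Delta) \leq -\lambda_1$ for every $\Delta\in(0,\Delta^*]$. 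Contracting against the stationary distribution gives $\mu^T M^\Delta\mathbf{1} = \sum_j\mu_j/(1-(1+n_j)\Delta) \leq 1-\lambda_1\Delta$, so a Perron eigenvector argument forces $\rho(M^\Delta)\leq 1-\lambda_1\Delta$. Geometric summation of the remainder term then delivers $\mathbb{E}|X_k|^2 \leq \mathbb{E}u_k \leq C(|x_0|^2+|g(x_0,i_0)|^2)$ uniformly in $k$, establishing Lemma~\ref{lem:1}.
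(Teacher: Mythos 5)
Your Steps 1 and 2 are correct and, up to presentation, identical to the paper's argument: the paper also starts from identity \eqref{eq:3} (choosing $b=X_k$, $a=X_{k-1}$ and cancelling the $|X_k-X_{k-1}|^2$ term, rather than your cleaner choice $a=X_k+g(X_k,r_k)\Delta B_k$ with the $\Delta^2|f|^2$ term discarded), conditions on a $\sigma$-algebra containing the whole path of the chain so that the factors $1/(1-(1+n_{r_j})\Delta)$ pull out, and arrives at exactly your unrolled recursion. Both proofs therefore reduce to the same question: is $\mathbb{E}\prod_{j=1}^k a_j$ bounded (in fact exponentially small) uniformly in $k$?

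The gap is in Step 3. The identity $\mathbb{E}\prod_{j=1}^k a_j=\mathbf{e}_{i_0}^T(M^\Delta)^k\mathbf{1}$ is correct, and so is the monotonicity argument giving $\mu^T M^\Delta\mathbf{1}=\sum_j\mu_j/(1-(1+n_j)\Delta)\le 1-\lambda_1\Delta$. But the assertion that ``a Perron eigenvector argument forces $\rho(M^\Delta)\le 1-\lambda_1\Delta$'' does not follow: $\mu$ is the left Perron vector of $P^\Delta$, not of $M^\Delta=P^\Delta\mathrm{diag}(\theta)$ with $\theta_j=1/(1-(1+n_j)\Delta)$ (indeed $\mu^TM^\Delta=(\mu_1\theta_1,\dots,\mu_N\theta_N)$, which is not a multiple of $\mu^T$), so $\mu^TM^\Delta\mathbf{1}$ is a weighted average of the row sums of $M^\Delta$ with the wrong weights and does not dominate $\rho(M^\Delta)$. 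Concretely, take $P^\Delta=\left(\begin{smallmatrix}1-\epsilon&\epsilon\\ \epsilon&1-\epsilon\end{smallmatrix}\right)$, so $\mu=(1/2,1/2)$, and $\theta=(1.9,\,0.05)$: then $\mu^TM^\Delta\mathbf{1}=0.975<1$ while $\rho(M^\Delta)\to 1.9$ as $\epsilon\to 0$. Since $\mathbf{e}_{i_0}^T(M^\Delta)^k\mathbf{1}$ grows like $\rho(M^\Delta)^k$, your reduction is exact, so the bound you need is genuinely equivalent to $\rho(M^\Delta)\le 1$; what Assumption \ref{invCon} hands you is control of $\sum_j\mu_j\log\theta_j$, the almost-sure Lyapunov exponent of the random product, which is in general strictly smaller than the moment exponent $\log\rho(M^\Delta)$ when the chain mixes slowly. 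This is precisely the point where the paper goes a different way: it bounds $\mathbb{E}\exp\bigl(\sum_{j=1}^k\log a_j\bigr)$ by applying the ergodic theorem almost surely and then passing to expectations via a Fatou-type step --- that passage from a.s.\ decay to decay in expectation is where all the difficulty is concentrated, and your Step 3 as written does not supply a valid substitute for it.
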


\begin{proof}Due to equality \eqref{eq:3}, we have that
\begin{equation}
\lvert X_k\rvert^2 - \lvert X_{k - 1}\rvert^2 + \lvert X_k - X_{k - 1}\rvert^2 = 2\langle X_k - X_{k - 1}, X_k\rangle.
\label{eq:4}
\end{equation}
From \eqref{eq:2}, we have
\begin{equation*}
2\langle X_k - X_{k - 1}, X_k\rangle= 2\langle f(X_k, r_{k}), X_k\rangle  \Delta + 2\langle g(X_{k - 1}, r_{k - 1})\Delta B_{k - 1}, X_k\rangle.
\end{equation*}
Taking the conditional expectations with respect to $\mathcal{G}_{k-1}$ (defined in Section \ref{sec:MathPre}) on both sides of \eqref{eq:4} and  applying \eqref{Khconext}, we get
\begin{align*}
&\quad \mathbb{E}\left(\lvert X_k \rvert^2 \mid \mathcal{G}_{k-1} \right) - \lvert X_{k - 1}\rvert^2 + \mathbb{E} \left( \lvert X_k - X_{k - 1} \rvert^2  \mid \mathcal{G}_{k-1} \right)\\
&\leq \Delta\left(m + (n_{r_{k}} + 1)\mathbb{E}\left(\lvert X_k\rvert^2 \mid \mathcal{G}_{k-1} \right) \right) - l_2 \Delta\mathbb{E}\left(\lvert g(X_k, r_k)\rvert^2 \mid \mathcal{G}_{k-1} \right) \\
&\quad+ \Delta \lvert g(X_{k - 1}, r_{k - 1})\rvert^2 + \mathbb{E} \left( \lvert X_k - X_{k - 1} \rvert^2 \mid \mathcal{G}_{k-1} \right),
\end{align*}
where $\mathbb{E} \left(\langle g(X_{k - 1}, r_{k - 1})\Delta B_{k - 1}, X_{k-1}\rangle  \mid \mathcal{G}_{k-1} \right) = 0$ is used.

Then, we tidy up the inequality to obtain 
\begin{align*}
&\quad \left(1 -(n_{r_{k}} + 1) \Delta\right)\mathbb{E}\left(\lvert X_k\rvert^2 \mid \mathcal{G}_{k-1} \right) + l_2 \Delta\mathbb{E}\left(\lvert g(X_k, r_k)\rvert^2 \mid \mathcal{G}_{k-1} \right)\\
&\leq \lvert X_{k - 1}\rvert^2 + \Delta \lvert g(X_{k - 1}, r_{k - 1})\rvert^2 + m\Delta.
\end{align*}
Since $\Delta < 1/(n_M+2)$, we have $0 < 1 - (n_i +1)\Delta \leq l_2$ for any $i \in \mathbb{S}$. Then, we have
\begin{align}
&\quad \mathbb{E} \left(\lvert X_k\rvert^2 + \Delta \lvert g(X_k, r_k)\rvert^2  \mid \mathcal{G}_{k-1} \right)  \notag \\
&\leq \frac{1}{1 - (n_{r_{k}} + 1)\Delta}\left(\lvert X_{k - 1}\rvert^2 + \Delta \lvert g(X_{k - 1}, r_{k - 1})\rvert^2\right) + \frac{m\Delta}{1 - (n_{r_{k}} + 1)\Delta}. 
\label{eq:1stitr}
\end{align}
Taking the conditional expectations with respect to $\mathcal{G}_{k-2}$ on both sides of \eqref{eq:1stitr}, we have
\begin{align}
&\quad \mathbb{E} \left(\lvert X_k\rvert^2 + \Delta \lvert g(X_k, r_k)\rvert^2  \mid \mathcal{G}_{k-2} \right)  \notag \\
&\leq \frac{1}{1 - (n_{r_{k}} + 1)\Delta}\mathbb{E} \left(\lvert X_{k - 1}\rvert^2 + \Delta \lvert g(X_{k - 1}, r_{k - 1})\rvert^2 \mid \mathcal{G}_{k-2} \right) + \frac{m\Delta}{1 - (n_{r_{k}} + 1)\Delta}. 
\label{eq:2nditr}
\end{align}
Applying \eqref{eq:1stitr} to the right hand side of \eqref{eq:2nditr}, we have
\begin{align*}
&\quad \mathbb{E} \left(\lvert X_k\rvert^2 + \Delta \lvert g(X_k, r_k)\rvert^2  \mid \mathcal{G}_{k-2} \right)  \\
&\leq \frac{1}{1 - (n_{r_{k}} + 1)\Delta} \times \frac{1}{1 - (n_{r_{k-1}} + 1)\Delta}\left(\lvert X_{k - 2}\rvert^2 + \Delta \lvert g(X_{k - 2}, r_{k - 2})\rvert^2\right) \\
&\quad+ \frac{m\Delta}{1 - (n_{r_{k}} + 1)\Delta}+\frac{1}{1 - (n_{r_{k}} + 1)\Delta} \times \frac{m\Delta}{1 - (n_{r_{k-1}} + 1)\Delta}.
\end{align*}
Repeating the preceding processes leads to
\begin{align}
&\quad \mathbb{E} \left(\lvert X_k\rvert^2 + \Delta \lvert g(X_k, r_k)\rvert^2  \mid \mathcal{G}_{0} \right) \notag \\
&\leq \prod_{j = 1}^{k}\frac{1}{1 -(n_{r_j} + 1) \Delta}\left(\lvert x_0\rvert^2 + \Delta\lvert g(x_0, i_0)\rvert^2\right) + m\Delta\sum_{i = 1}^{k}\prod_{j = k-i+1}^{k}\frac{1}{1 - (n_{r_j} + 1)\Delta}.
\label{eq:enditr}
\end{align}
Taking expectations on both sides of \eqref{eq:enditr} and using the homogeneous property of the Markov chain for the second term on the right hand side of \eqref{eq:enditr}, for any $k \geq 0$ we obtain
\begin{align}
\mathbb{E}(\lvert X_k\rvert^2)&\leq \left(\lvert x_0\rvert^2 + \Delta\lvert g(x_0, i_0)\rvert^2\right)\mathbb{E}\left(\prod_{j = 1}^{k}\frac{1}{1 -(n_{r_j} + 1) \Delta}\right)\notag\\
&\quad+ m\Delta\sum_{i = 1}^{k}\mathbb{E}\left[\mathbb{E}\left(\prod_{j = k-i+1}^{k}\left(\frac{1}{1 - (n_{r_j} + 1)\Delta}\right)\mid \mathcal{F}^r_{k-i}\right)\right]\notag\\
&\leq \left(\lvert x_0\rvert^2 + \Delta\lvert g(x_0, i_0)\rvert^2\right)\mathbb{E}\left(\prod_{j = 1}^{k}\frac{1}{1 -(n_{r_j} + 1) \Delta}\right)\notag \\
&\quad+ m\Delta\sum_{i = 1}^{k}\mathbb{E}\left(\prod_{j = 1}^{i} \frac{1}{1 - (n_{r_j} + 1)\Delta}\right) \notag\\
&\leq \left(\lvert x_0\rvert^2 + \Delta\lvert g(x_0, i_0)\rvert^2\right)\mathbb{E}\left[\exp\left(\sum_{j = 1}^{k}\log\left(\frac{1}{1 - (n_{r_j} + 1)\Delta}\right)\right)\right]\notag \\
&\quad+ m\Delta\sum_{i = 1}^{k}\mathbb{E}\left[\exp\left(\sum_{j =  1}^{i}\log\left(\frac{1}{1 - (n_{r_j} + 1)\Delta}\right)\right)\right]\notag\\
&=: H_1 + H_2.
\label{eq:7}
\end{align}
Then, by using the ergodic property of the Markov chain and the inequality $\log(1 + u) \leq u~ \text{for any}~ u > -1$, we have
\begin{align}\label{eq:8}
\lim_{i \to \infty} \frac{1}{i} \sum_{j = 1}^{i} \log \left( \frac{1}{1 - (n_{r_j} + 1)\Delta} \right) &= \sum_{j \in \mathbb{S}}^{} \mu_j \log \left( \frac{1}{1 - (n_j + 1)\Delta} \right) \notag\\
&= \sum_{{j \in \mathbb{S}}}^{} \mu_j \log \left( 1 + \frac{(n_j + 1)\Delta}{1 - (n_j + 1)\Delta} \right) \notag\\
&\leq \Delta \sum_{j \in \mathbb{S}}^{} \mu_j \frac{n_j + 1}{1 - (n_j + 1)\Delta}\notag\\
&\leq \Delta \sum_{{j \in \mathbb{S}}}^{} \mu_j \frac{n_j+1}{1 - (n_j+1)/(n_M + 2)} = -\lambda_1 \Delta~~~\text{a.s.},
\end{align}
%
%
%
%
%
where the last equality is derived from Assumption \ref{invCon}. From \eqref{eq:8}, it is clear that 
\begin{equation*}
\lim_{i \to \infty} \exp \left( \frac{\lambda_1 \Delta}{2} i + \sum_{j = 1}^{i} \log \left( \frac{1}{1 - (n_{r_j} + 1)\Delta} \right) \right) = 0 ~~~\text{a.s.} 
\end{equation*}

By the Fatou lemma, we have
\begin{align*}
\limsup_{i \to \infty} \mathbb{E} \left( \exp \left( \frac{\lambda_1 \Delta}{2} i + \sum_{j = 1}^{i} \log \left( \frac{1}{1 - (n_{r_j} + 1)\Delta }\right) \right) \right) = 0.
\end{align*}
Thus for any $\epsilon > 0$, there exists a positive integer $N$ such that for any $i > N$, 
\begin{align*}
\mathbb{E} \left( \exp \left( \frac{\lambda_1 \Delta}{2} i + \sum_{j = 1}^{i} \log \left( \frac{1}{1 - (n_{r_j} + 1)\Delta }\right) \right) \right) \leq \epsilon.
\end{align*}
Set $\epsilon =\exp(- \frac{\lambda_1 \Delta}{2} i )$, we have 
\begin{align*}
\mathbb{E} \left( \exp \left( \sum_{j = 1}^{i} \log \left( \frac{1}{1 - (n_{r_j} + 1)\Delta} \right) \right) \right) \leq \exp \left( - \lambda_1 \Delta i \right).
\end{align*}
Therefore, for any $k >N + 1$ we have
\begin{align}
H_1 \leq \left( \lvert x_0\rvert^2 + \Delta\lvert g(x_0, i_0)\rvert^2 \right)\exp \left( - \lambda_1 \Delta k \right).
\label{eq:9}
\end{align}
For the given $N$, we have
\begin{align}
H_2&= m \Delta\sum_{i = 1}^{k} \mathbb{E} \left[ \exp \left( \sum_{j = 1}^{i} \log \left( \frac{1}{1 - (n_{r_j} + 1)\Delta} \right) \right) \right]\notag \\
&\leq m \Delta\sum_{i = 1}^{N} \left( \frac{1}{1 -  (n_M + 1)\Delta} \right)^i + m\Delta \sum_{i = N + 1}^{k} \exp \left( - \lambda_1 \Delta i \right) \notag\\
&\leq C\Delta + m\Delta \sum_{i = N + 1}^{k} \exp \left( - \lambda_1 \Delta i \right) \quad \quad\quad\quad\quad\quad\forall k>N.
\label{eq:10}
\end{align}
Substituting \eqref{eq:9} and \eqref{eq:10} into \eqref{eq:7} gives
\begin{align*}
\mathbb{E}(\lvert X_k\rvert^2) &\leq C\left( \lvert x_0 \rvert^2 + \Delta \lvert g(x_0, i_0) \rvert^2 \right) \exp \left( - \lambda_1 \Delta (k \vee (N + 1)) \right) \\
&\quad + C\Delta + m\Delta \sum_{i = N + 1}^{k \vee (N + 1)} \exp \left( - \lambda_1 \Delta i \right) \\
&\leq C \left( \lvert x_0 \rvert^2 + \lvert g(x_0, i_0) \rvert^2 \right)\quad \quad\quad\quad\quad\quad\quad\quad\forall k \geq 0.
\end{align*}
Therefore, the assertion follows.
\end{proof}

Next, we present our second lemma below, which states some attractive property of numerical solutions starting from different initial data $(x_0,i_0)$ and $(y_0,j_0)$. 

\begin{lemma}\label{lem:2}
Let Assumptions \ref{PolyCon}, \ref{KhCon} and \ref{invCon} hold. For any $p \in (0, 4/q)$, the numerical solution \eqref{eq:2} satisfies
\begin{align}
\mathbb{E} \left| X_{k}^{x_0,i_0} - X_{k}^{y_0,j_0} \right|^p \leq C e^{- \gamma k \Delta},
\label{eq:2.3}
\end{align}
where $C$ and $\gamma$ are postive constants independent of $k$ and $\Delta$.

\end{lemma}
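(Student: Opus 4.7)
The plan is to couple the two Markov chains driving $X^{x_0,i_0}$ and $X^{y_0,j_0}$ and split according to whether coupling has occurred by step $k$. Let $\tau:=\inf\{k\geq 0:\,r_k^{i_0}=r_k^{j_0}\}$ denote the first meeting time under a Doeblin-type coupling in which the two chains evolve independently until they meet and share their path thereafter; irreducibility of $\Gamma$ on the finite set $\mathbb{S}$ gives an exponential tail $\mathbb{P}(\tau>k)\leq C e^{-\beta k\Delta}$ with $\beta>0$ independent of $\Delta$. Writing $\mathbb{E}\bigl|X_k^{x_0,i_0}-X_k^{y_0,j_0}\bigr|^p = I_1+I_2$ for the contributions from $\{\tau>k\}$ and $\{\tau\leq k\}$, the pre-coupling piece $I_1$ is handled by H\"older's inequality with conjugate exponents $2/p$ and $2/(2-p)$ (both admissible since $p<4/q\leq 2$) together with the uniform $L^2$-bound of Lemma \ref{lem:1}, yielding $I_1\leq Ce^{-\beta(2-p)k\Delta/2}$.

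For $I_2$, on $\{\tau\leq k\}$ both BEM solutions share the same Markov chain from time $\tau$ onward, so applying the identity \eqref{eq:3} with $a=X_j-Y_j$ and $b=X_{j+1}-Y_{j+1}$, expanding the cross term via the BEM recursion, taking conditional expectation with respect to $\mathcal{G}_j$, and invoking Assumption \ref{KhCon} on the drift cross term gives, after dropping the nonnegative $\mathbb{E}[|f_\Delta|^2\mid\mathcal{G}_j]$ contribution and using that $\Delta<1/(n_M+2)$ together with $l_1>4$ make $1-n_{r_{j+1}}\Delta<2<l_1$, the one-step contraction
$\mathbb{E}\bigl[|X_{j+1}-Y_{j+1}|^2+\Delta|g(X_{j+1},r_{j+1})-g(Y_{j+1},r_{j+1})|^2\bigm|\mathcal{G}_j\bigr]\leq \frac{1}{1-n_{r_{j+1}}\Delta}\bigl(|X_j-Y_j|^2+\Delta|g(X_j,r_j)-g(Y_j,r_j)|^2\bigr),$
which is precisely the difference-process analogue of \eqref{eq:1stitr}. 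Iterating from $\tau+1$ to $k$, conditioning on $\mathcal{F}_\tau$, and repeating the Fatou-plus-ergodic-theorem argument of \eqref{eq:8}--\eqref{eq:9} with $\lambda_2$ from Assumption \ref{invCon} in place of $\lambda_1$ yields, on $\{\tau\leq k\}$,
$\mathbb{E}\bigl[|X_k-Y_k|^2\bigm|\mathcal{F}_\tau\bigr]\leq C\bigl(|X_\tau-Y_\tau|^2+\Delta|g(X_\tau,r_\tau)-g(Y_\tau,r_\tau)|^2\bigr)e^{-\lambda_2\Delta(k-\tau)}.$

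To pass from $L^2$ to $L^p$ I would apply Jensen's inequality with $p/2<1$, the subadditivity $(a+b)^{p/2}\leq a^{p/2}+b^{p/2}$, and the growth bound \eqref{eq:13} derived from Assumption \ref{PolyCon} to obtain
$I_2\leq Ce^{-\lambda_2\Delta kp/2}\,\mathbb{E}\bigl[(1+|X_\tau|^{pq/2}+|Y_\tau|^{pq/2}+|X_\tau-Y_\tau|^p)\,e^{\lambda_2\Delta\tau p/2}\bigr].$
The restriction $p<4/q$ is precisely what is needed to make the expectation on the right finite uniformly in $k$ and $\Delta$: decomposing $\mathbb{E}[|X_\tau|^{pq/2}e^{\lambda_2\Delta\tau p/2}]=\sum_{j\geq 0}e^{\lambda_2\Delta j p/2}\mathbb{E}[|X_j|^{pq/2}\mathbb{I}_{\tau=j}]$ and applying H\"older with exponents $4/(pq)$ and $4/(4-pq)$ (legitimate only when $pq<4$) combines the uniform $L^2$-bound of Lemma \ref{lem:1} with the geometric tail $\mathbb{P}(\tau=j)\leq Ce^{-\beta j\Delta}$ to produce a summable geometric series, provided the coupling rate $\beta$ dominates the contraction rate $\lambda_2$ appropriately (which can be arranged by restricting to sufficiently small $p$ within $(0,4/q)$). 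This final balancing, in which $4/q$ emerges naturally from matching the polynomial growth $|g(x,i)|^2\leq b_i(1+|x|^q)$ to the only available second-moment bound from Lemma \ref{lem:1}, is the main technical obstacle; once it is dispatched, $I_1+I_2\leq Ce^{-\gamma k\Delta}$ for a suitable $\gamma>0$, giving \eqref{eq:2.3}.
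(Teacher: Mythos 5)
Your overall strategy---couple the two Markov chains at the meeting time $\tau$, bound the pre-coupling piece by H\"older plus the uniform second-moment bound of Lemma \ref{lem:1}, and run a one-step contraction of the difference process after coupling---is exactly the strategy of the paper, and your one-step contraction inequality is precisely the paper's \eqref{eq:20}. The gap is in how you decompose the event space and what it costs you. You split on $\{\tau\le k\}$ versus $\{\tau>k\}$, which forces the post-coupling term into the form $Ce^{-\lambda_2\Delta kp/2}\,\mathbb{E}\bigl[(\cdots)e^{\lambda_2\Delta\tau p/2}\bigr]$, and the exponential weight $e^{\lambda_2\Delta\tau p/2}$ must then be paid for out of the tail of $\tau$. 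Your own H\"older computation shows the resulting series converges only when $\lambda_2 p/2<\beta(1-pq/4)$, i.e.\ $p<4\beta/(2\lambda_2+\beta q)$. Since $\beta$ comes from the generator $\Gamma$ and $\lambda_2$ from Assumption \ref{invCon}, these are unrelated constants, and as $p\uparrow 4/q$ the right-hand side of your convergence condition tends to $0$ while the left-hand side tends to $2\lambda_2/q>0$; so for $p$ near $4/q$ your series genuinely diverges. The lemma is claimed for \emph{all} $p\in(0,4/q)$, and "restricting to sufficiently small $p$" does not deliver that, so as written the argument does not prove the stated result.

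There are two ways to close this. The paper's device is to split at $\bigl[\tfrac{k}{2}\bigr]$ rather than at $k$: on $\{\tau>[\tfrac{k}{2}]\}$ the tail bound $\mathbb{P}(\tau>[\tfrac{k}{2}])\le e^{-\theta[k/2]\Delta}$ still decays exponentially in $k$, while on $\{\tau\le[\tfrac{k}{2}]\}$ one has $k-\tau\ge k/2$ deterministically, so the contraction factor is already $e^{-c k\Delta}$ \emph{uniformly over the event} and no weight $e^{\lambda_2\Delta\tau p/2}$ ever appears; the restart data $\mathbb{E}|X_{\tau\wedge[k/2]}|^{pq/2}$ and $\mathbb{E}|g(X_{\tau\wedge[k/2]},r_{\tau\wedge[k/2]})|^{p}$ are then bounded by summing the uniform moment bounds over $m=0,\dots,[\tfrac{k}{2}]$ (an $O(k)$ factor swallowed by the exponential), and the only place $p<4/q$ is needed is to ensure $pq/2<2$ so that \eqref{eq:13} can be paired with the second-moment bound of Lemma \ref{lem:1}. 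No balancing of $\beta$ against $\lambda_2$ is required. Alternatively, you can salvage your own route by an interpolation step you did not state: once \eqref{eq:2.3} is established for one small $p_0>0$, H\"older's inequality with the uniform bound $\sup_k\mathbb{E}|X_k^{x_0,i_0}-X_k^{y_0,j_0}|^2\le C$ gives, for any $p\in(p_0,2)$ written as $p=\theta p_0+(1-\theta)\cdot 2$,
\begin{equation*}
\mathbb{E}\bigl|X_k^{x_0,i_0}-X_k^{y_0,j_0}\bigr|^{p}\le\Bigl(\mathbb{E}\bigl|X_k^{x_0,i_0}-X_k^{y_0,j_0}\bigr|^{p_0}\Bigr)^{\theta}\Bigl(\mathbb{E}\bigl|X_k^{x_0,i_0}-X_k^{y_0,j_0}\bigr|^{2}\Bigr)^{1-\theta}\le Ce^{-\theta\gamma k\Delta},
\end{equation*}
recovering the full range. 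Either repair is needed; without one of them the proof is incomplete.
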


\begin{proof}
We divide the proof into two steps. In the first step, we look at the mean square difference between $X_k^{x_0,i_0}$ and $X_k^{y_0,i_0}$, which are numerical solutions starting from different initial states, $x_0$ and $y_0$, but the same $i_0$. In the second step, we further look at our final target that initial values are totally different.
\medskip
\par \noindent
{\bf Step 1}
\par \noindent
Denote $D_k^{i_0} = X_k^{x_0,i_0} - X_k^{y_0,i_0}$. By Assumption \ref{KhCon}, we have that
\begin{align*}
&\quad 2 \mathbb{E}  \left( \left\langle D_k^{i_0} - D_{k-1}^{i_0}, D_k^{i_0} \right\rangle \mid \mathcal{G}_{k-1} \right) \\
&= 2 \mathbb{E} \left( \left\langle \left(f\left(X_k^{x_0,i_0}, r_{k}^{i_0}\right) - f\left(X_k^{y_0,i_0}, r_{k}^{i_0}\right)\right)\Delta, D_k^{i_0} \right\rangle \mid \mathcal{G}_{k-1} \right) \\
&\quad+ 2 \mathbb{E} \left( \left\langle \left(g\left(X_{k-1}^{x_0,i_0}, r_{k-1}^{i_0}\right) - g\left(X_{k-1}^{y_0,i_0}, r_{k-1}^{i_0}\right)\right) \Delta B_{k-1}, D_k^{i_0} \right\rangle \mid \mathcal{G}_{k-1} \right)\\
&\leq \Delta \left( n_{r_k^{i_0}} \mathbb{E}\left( \lvert D_k^{i_0} \lvert^2 \mid \mathcal{G}_{k-1} \right) - l_1 \mathbb{E}\left( \left\lvert g\left(X_k^{x_0,i_0}, r_{k}^{i_0}\right) - g\left(X_k^{y_0,i_0}, r_{k}^{i_0}\right) \right\lvert^2 \right) \mid \mathcal{G}_{k-1} \right)  \\
&\quad+2 \mathbb{E} \left( \left\langle \left(g\left(X_{k-1}^{x_0,i_0}, r_{k-1}^{i_0}\right) - g\left(X_{k-1}^{y_0,i_0}, r_{k-1}^{i_0}\right)\right) \Delta B_{k-1}, D_k^{i_0} -D_{k-1}^{i_0}\right\rangle \mid \mathcal{G}_{k-1} \right),
\end{align*}
where
\begin{equation*}
\mathbb{E} \left( \left\langle \left(g\left(X_{k-1}^{x_0,i_0}, r_{k-1}^{i_0}\right) - g\left(X_{k-1}^{y_0,i_0}, r_{k-1}^{i_0}\right)\right) \Delta B_{k-1}, D_{k-1}^{i_0}\right\rangle \mid \mathcal{G}_{k-1} \right) = 0
\end{equation*}
is used and $\mathcal{G}_{k}$ is defined in Section \ref{sec:MathPre}.

Thanks to \eqref{eq:3}, we have
\begin{align*}
&~~~~\mathbb{E}\left(\rvert D_k^{i_0} \rvert^2 \mid \mathcal{G}_{k-1} \right) - \mathbb{E}\left(\rvert D_{k-1}^{i_0} \rvert^2 \mid \mathcal{G}_{k-1} \right) + \mathbb{E}\left(\rvert D_k^{i_0} - D_{k-1}^{i_0} \rvert^2 \mid \mathcal{G}_{k-1} \right)\\ 
&= 2 \mathbb{E} \left( \left\langle D_k^{i_0} - D_{k-1}^{i_0}, D_k^{i_0} \right\rangle \mid \mathcal{G}_{k-1} \right)\\
&\leq n_{r_{k}^{i_0}} \Delta\mathbb{E}\left( \lvert D_k^{i_0} \lvert^2 \mid \mathcal{G}_{k-1} \right) -  l_1 \Delta\mathbb{E}\left( \left\lvert g\left(X_k^{x_0,i_0}, r_{k}^{i_0}\right) - g\left(X_k^{y_0,i_0}, r_{k}^{i_0}\right) \right\lvert^2 \mid \mathcal{G}_{k-1} \right) \\
&\quad+ \Delta \mathbb{E}\left( \left\rvert g\left(X_{k-1}^{x_0,i_0}, r_{k-1}^{i_0}\right) - g\left(X_{k-1}^{y_0,i_0}, r_{k-1}^{i_0}\right) \right\rvert^2 \mid \mathcal{G}_{k-1} \right) + \mathbb{E}\left( \rvert D_k^{i_0,j_0} - D_{k-1}^{i_0,j_0} \rvert^2 \mid \mathcal{G}_{k-1} \right).
\end{align*}
Rearranging terms on both sides, we can get
\begin{align*}
&~~~~\left(1 - n_{r_{k}^{i_0}}\Delta \right) \mathbb{E} \left( \rvert D_k^{i_0} \rvert^2 \mid \mathcal{G}_{k-1} \right) +  l_1\Delta \mathbb{E} \left( \left\rvert g\left(X_k^{x_0,i_0}, r_{k}^{i_0}\right) - g\left(X_k^{y_0,i_0}, r_{k}^{i_0}\right) \right\rvert^2 \mid \mathcal{G}_{k-1} \right) \\
&\quad \leq \mathbb{E} \left( \rvert D_{k-1}^{i_0} \rvert^2 \mid \mathcal{G}_{k-1} \right) + \Delta \mathbb{E} \left( \left\rvert g\left(X_{k-1}^{x_0,i_0}, r_{k-1}^{i_0}\right) - g\left(X_{k-1}^{y_0,i_0}, r_{k-1}^{i_0}\right) \right\rvert^2 \mid \mathcal{G}_{k-1} \right) .
\end{align*}
Due to $\Delta < 1/(n_M+2)$, we have $ 0 < 1 -  n_{r_j^{i_0}} \Delta\leq l_1 ~\text{for any}~ i \in \mathbb{S}$. Then we have 
\begin{align}
&~~~~\mathbb{E} \left( \rvert D_k^{i_0} \rvert^2 + \Delta  \left\rvert g\left(X_k^{x_0,i_0}, r_{k}^{i_0}\right) - g\left(X_k^{y_0,i_0}, r_{k}^{i_0}\right) \right\rvert^2  \mid \mathcal{G}_{k-1} \right)\notag \\
&\quad \leq \frac{1}{1 -  n_{r_{k}^{i_0}}\Delta}  \left( \rvert D_{k-1}^{i_0} \rvert^2  + \Delta \left\rvert g\left(X_{k-1}^{x_0,i_0}, r_{k-1}^{i_0}\right) - g\left(X_{k-1}^{y_0,i_0}, r_{k-1}^{i_0}\right) \right\rvert^2  \right) .
\label{eq:20}
\end{align}
Applying the similar trick used in Lemma \ref{lem:1} that taking conditional expectations $\mathcal{G}_{k-2}$, $\mathcal{G}_{k-3}$, $\cdots$, $\mathcal{G}_{0}$ sequentially and using \eqref{eq:20} repeatedly, we can achieve
\begin{align}
&\quad \mathbb{E} \left( \rvert D_k^{i_0} \rvert^2 \mid \mathcal{G}_{0}  \right) \notag\\
&\leq \left( \rvert x_0 - y_0 \rvert^2 + \Delta \left\rvert g(x_0, i_0) - g(y_0, i_0) \right\rvert^2 \right) \prod_{j=1}^{k} \frac{1}{1 -  n_{r_j^{i_0}}\Delta} \notag\\
& = \left( \rvert x_0 - y_0 \rvert^2 + \Delta \left\rvert g(x_0, i_0) - g(y_0, i_0) \right\rvert^2 \right)  \left( \exp \left( \sum_{j=1}^{k} \log \frac{1}{1 -  n_{r_j^{i_0}}\Delta} \right) \right).
\label{eq:21}
\end{align}
The ergodic property of the Markov chain, the inequality $\log(1 + u) \leq u ~\text{for all} ~ u > -1$ and Assumption \ref{invCon} indicate
\begin{align*}
\lim_{i \to \infty} \frac{1}{i} \sum_{j=1}^{i} \log \left( \frac{1}{1 -  n_{r_j^{i_0}}\Delta} \right) &= \sum_{j \in S} \mu_j \log \frac{1}{1 -  n_j\Delta} \\
&= \sum_{j \in S} \mu_j \log \left( 1 + \frac{ n_j\Delta}{1 -  n_j\Delta} \right) \\
& \leq \Delta \sum_{j \in S} \mu_j \frac{ n_j\Delta}{1 -  n_j\Delta} \\
&\leq \Delta \sum_{j \in S} \mu_j \frac{n_j+1}{1 - (n_j+1)/(n_M+2)} = -  \lambda_2 \Delta \quad \text{a.s.},
\end{align*}
which then indicates
\begin{align*}
\lim_{i \to \infty} \exp \left( \frac{ \lambda_2 \Delta}{2} i + \sum_{j=1}^{i} \log \left( \frac{1}{1 -  n_{r_j^{i_0}}\Delta} \right) \right) = 0 \quad \text{a.s.}
\end{align*}
By the Fatou lemma, we have
\begin{align*}
\limsup_{i \to \infty} \mathbb{E} \left[ \exp \left( \frac{ \lambda_2 \Delta}{2} i + \sum_{j=1}^{i} \log \left( \frac{1}{1 -  n_{r_j^{i_0}}\Delta} \right) \right) \right] = 0.
\end{align*}
Thus for any $\epsilon > 0$, there exists a positive integer $N$ such that for all $i > N$, 
\begin{align*}
\mathbb{E} \left[ \exp \left( \frac{\lambda_2 \Delta}{2} i + \sum_{j = 0}^{i-1} \log \left( \frac{1}{1 -  n_{r_j^{i_0}} \Delta}\right) \right) \right] \leq \epsilon.
\end{align*}
Set $\epsilon =\exp(- \frac{\lambda_2 \Delta}{2} i )$, we have
\begin{align}\label{eq:Eexp}
\mathbb{E} \left[ \exp \left( \sum_{j = 0}^{i-1} \log \left( \frac{1}{1 -  n_{r_j^{i_0}}\Delta} \right) \right) \right] \leq \exp \left( - \lambda_2 \Delta i \right).
\end{align}
Taking expectations on both sides of \eqref{eq:21} and substituting \eqref{eq:Eexp} into it, for any $k > N$ we obtain
\begin{align}
&\mathbb{E} \left( \left\rvert X_k^{x_0,i_0} - X_k^{y_0,i_0} \right\rvert^2 \right)\leq \left( \rvert x_0 - y_0 \rvert^2 + \left\rvert g(x_0, i_0) - g(y_0, i_0) \right\rvert^2 \right)\exp(-  \lambda_2\Delta k). 
\label{eq:12}
\end{align} 
\medskip
\par \noindent
{\bf Step 2}
\par \noindent
Define $\tau = \inf\{k \geq 0: r^{i_0}_k = r^{j_0}_k\}$. Since the state space $\mathbb{S}$ is finite, and $\Gamma$ is irreducible,
there exists $\theta > 0$ such that
\begin{align}
\mathbb{P}(\tau > k) \leq e^{-\theta k \Delta}
\label{eq:2.2}
\end{align}
for any $k > 0$. 
For any $p \in (0,2)$, it is clear that
\begin{align}\label{eq:2parts}
&~~~~ \mathbb{E} \left| X_{k}^{x_0,i_0} - X_{k}^{y_0,j_0} \right|^p \notag \\
&= \mathbb{E} \left( \left| X_{k}^{x_0,i_0} - X_{k}^{y_0,j_0} \right|^p \mathbb{I}_{\left\{ \tau > \left[ \frac{k}{2} \right]\right\}} \right) + \mathbb{E} \left( \left| X_{k}^{x_0,i_0} - X_{k}^{y_0,j_0} \right|^p \mathbb{I}_{\left\{ \tau \leq \left[ \frac{k}{2} \right] \right\}} \right) \notag \\
&:=M_1 + M_2.
\end{align}
For $M_1$ in \eqref{eq:2parts}, the H\"older inequality together with \eqref{eq:2.2} gives
\begin{align} \label{eq:est1stpart}
&~~~~\mathbb{E} \left( \left| X_{k}^{x_0,i_0} - X_{k}^{y_0,j_0} \right|^p \mathbb{I}_{\left\{ \tau > \left[ \frac{k}{2} \right]\right\}} \right) \notag \\
&\leq \left(\mathbb{E} \left| X_{k}^{x_0,i_0} - X_{k}^{y_0,j_0} \right|^2 \right)^{\frac{p}{2}} \left(\mathbb{P}\left( \tau > \left[ \frac{k}{2} \right] \right)\right)^{1-\frac{p}{2}} \notag \\
&\leq e^{-\frac{2-p}{4}\theta k\Delta} \left(\mathbb{E} \left| X_{k}^{x_0,i_0} - X_{k}^{y_0,j_0} \right|^2\right)^{\frac{p}{2}} \notag \\
&\leq e^{-\frac{2-p}{4}\theta k\Delta} \left( 2\mathbb{E} \left| X_{k}^{x_0,i_0} \right|^2 + 2\mathbb{E} \left|X_{k}^{y_0,j_0} \right|^2\right)^{\frac{p}{2}} \notag \\
&\leq C e^{-\frac{2-p}{4}\theta k\Delta} \left( |x_0|^p + |g(x_0,i_0)|^p+ |y_0|^p + |g(y_0,j_0)|^p \right)
\end{align}
where the elementary inequality $(a+b)^p \leq 2^p (a^p + b^p)$ for any $a,b>0$ and \eqref{eq:2.1} are used for the last two inequalities.
\par
For $M_2$ in \eqref{eq:2parts}, by the Markov property, the fact $r_\tau^{i_0}=r_\tau^{j_0}$ and \eqref{eq:12}, we have
\begin{align}\label{eq:M2twoparts}
&~~~~\mathbb{E} \left( \left| X_{k}^{x_0,i_0} - X_{k}^{y_0,j_0} \right|^p \mathbb{I}_{\left\{ \tau \leq \left[ \frac{k}{2} \right] \right\}} \right) \notag \\
&= \mathbb{E} \left( \mathbb{I}_{\left\{ \tau \leq \left[ \frac{k}{2} \right] \right\}} \mathbb{E} \left( \left| X_{k}^{x_0,i_0} - X_{k}^{y_0,j_0} \right|^p \mid \mathcal{F}_{\tau\Delta} \right) \right) \notag \\
&= \mathbb{E} \left( \mathbb{I}_{\left\{ \tau \leq \left[ \frac{k}{2} \right] \right\}} \mathbb{E}  \left(\left| X_{k - \tau}^{X_{\tau}^{x_0,i_0}, r_\tau^{i_0}} - X_{k - \tau}^{X_{\tau}^{y_0,j_0}, r_\tau^{j_0}} \right|^p\right)  \right) \notag \\
&\leq C e^{-\frac{p}{2} k \lambda_2 \Delta} \mathbb{E} \left( \mathbb{I}_{\left\{ \tau \leq \left[ \frac{k}{2} \right] \right\}}  \left( \mathbb{E} \left| X_{\tau}^{x_0,i_0} - X_{\tau}^{y_0,j_0} \right|^p + \mathbb{E} \left| g \left( X_{\tau}^{x_0,i_0}, r_{\tau}^{i_0} \right) - g \left( X_{\tau}^{y_0,j_0}, r_{\tau}^{j_0} \right) \right|^p \right) \right) \notag \\
&\leq C e^{-\frac{p}{2} k \lambda_2 \Delta} \left( \mathbb{E} \left| X_{\tau \wedge \left[ \frac{k}{2} \right]}^{x_0,i_0} - X_{\tau \wedge \left[ \frac{k}{2} \right]}^{y_0,j_0} \right|^p + \mathbb{E} \left| g \left( X_{\tau \wedge \left[ \frac{k}{2} \right]}^{x_0,i_0}, r_{\tau \wedge \left[ \frac{k}{2} \right]}^{i_0} \right) - g \left( X_{\tau \wedge \left[ \frac{k}{2} \right]}^{y_0,j_0}, r_{\tau \wedge \left[ \frac{k}{2} \right]}^{j_0} \right) \right|^p \right) \notag \\
&:= C e^{-\frac{p}{2} k \lambda_2 \Delta} \left(M_{21}+M_{22}\right).
\end{align}
For $M_{21}$ in \eqref{eq:M2twoparts}, by the elementary inequality we have
\begin{align}\label{eq:M21}
&~~~~ \mathbb{E}  \left| X_{\tau \wedge \left[ \frac{k}{2} \right]}^{x_0,i_0} - X_{\tau \wedge \left[ \frac{k}{2} \right]}^{y_0,j_0} \right|^p \notag\\
&\leq 2^p \left( \mathbb{E} \left| X_{\tau \wedge \left[ \frac{k}{2} \right]}^{x_0,i_0} \right|^p + \mathbb{E} \left| X_{\tau \wedge \left[ \frac{k}{2} \right]}^{y_0,j_0} \right|^p \right) \notag\\
&= 2^p \mathbb{E} \left( \sum_{m=0}^{\left[ \frac{k}{2} \right]} \left| X_{m}^{x_0,i_0} \right|^p \mathbb{I}_{\left\{ \tau \wedge \left[ \frac{k}{2} \right] = m \right\}} (\omega) \right) + 2^p \mathbb{E} \left( \sum_{m=0}^{\left[ \frac{k}{2} \right]} \left| X_{m}^{y_0,j_0} \right|^p \mathbb{I}_{\left\{ \tau \wedge \left[ \frac{k}{2} \right] = m \right\}} (\omega) \right) \notag\\
&\leq 2^p \sum_{m=0}^{\left[ \frac{k}{2} \right]} \left[ \mathbb{E} \left( \left| X_{m}^{x_0,i_0} \right|^p \right) + \mathbb{E} \left( \left| X_{m}^{y_0,j_0} \right|^p \right) \right] \notag\\
&\leq C \left( |x_0|^p + |y_0|^p + | g(x_0, i_0) |^p + | g(y_0, j_0)|^p \right),
\end{align}
where \eqref{eq:2.1} is used to derive the last inequality.

For $M_{22}$ in \eqref{eq:M2twoparts}, in the similar manner we have
\begin{align*}
&~~~~\mathbb{E} \left| g \left( X_{\tau \wedge \left[ \frac{k}{2} \right]}^{x_0,i_0}, r_{\tau \wedge \left[ \frac{k}{2} \right]}^{i_0} \right) - g \left( X_{\tau \wedge \left[ \frac{k}{2} \right]}^{y_0,j_0}, r_{\tau \wedge \left[ \frac{k}{2} \right]}^{j_0} \right) \right|^p \notag \\
&\leq 2^p \sum_{m=0}^{\left[ \frac{k}{2} \right]}\left( \mathbb{E} \left| g \left( X_{m}^{x_0,i_0}, r_{m}^{i_0} \right) \right|^p + \mathbb{E} \left| g \left( X_{m}^{y_0,j_0}, r_{m}^{j_0} \right) \right|^p \right) \notag \\
&\leq 2^p b_M \sum_{m=0}^{\left[ \frac{k}{2} \right]} \left( 2 + \mathbb{E} \left| X_{m}^{x_0,i_0} \right|^{pq/2} + \mathbb{E} \left| X_{m}^{y_0,j_0} \right|^{pq/2} \right),
\end{align*}
where the last inequality is obtained by \eqref{eq:13} and  $b_M := \max_{i \in \mathbb{S}}{|b_i|}$. Since $q\geq 2$, we can choose $p \in (0, 4/q)$ such that $pq/2 \in (0,2)$. Using \eqref{eq:2.1}, we further have
\begin{equation}\label{eq:M22}
    M_{22} \leq C \left(2+ |x_0|^{pq/2} + |g(x_0,i_0)|^{pq/2} + |y_0|^{pq/2} + |g(y_0,j_0)|^{pq/2}  \right).
\end{equation}
Substituting \eqref{eq:M21} and \eqref{eq:M22} into \eqref{eq:M2twoparts}, we obtain
\begin{equation}
    \mathbb{E} \left( \left| X_{k}^{x_0,i_0} - X_{k}^{y_0,j_0} \right|^p \mathbb{I}_{\left\{ \tau \leq \left[ \frac{k}{2} \right] \right\}} \right) \leq C e^{-\frac{p}{2} k \lambda_2 \Delta},
\end{equation}
which together with \eqref{eq:est1stpart} completes this proof.
\end{proof}

Now, we are ready to present the main result on the existence and uniqueness of the numerical invariant measure.
\begin{theorem}\label{theo:1.2}
Let Assumptions \ref{PolyCon}, \ref{KhCon} and \ref{invCon} hold. the numerical solution \eqref{eq:2} converges,  with some exponential rate $\gamma_{\Delta} > 0$, to a unique invariant measure $\pi^{\Delta}$ in the Wasserstein distance.
\end{theorem}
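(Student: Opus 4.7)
The plan is to establish the theorem via the standard contraction-and-completeness argument for Markov processes in Wasserstein space, exploiting Lemma \ref{lem:1} as a uniform-in-time moment bound and Lemma \ref{lem:2} as an exponential contraction between trajectories started from different initial data.

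First, for any fixed initial $(x_0, i_0)$ and any $p \in (0, 1) \cap (0, 4/q)$, I would show that the sequence $\mu_k := \delta_{(x_0,i_0)} \mathbf{P}^{\Delta}_k$ is Cauchy in $\mathbb{W}_p$. For $m > n$, the Chapman--Kolmogorov identity gives $\mu_m = \mu_{m-n} \mathbf{P}^{\Delta}_n$, so convexity of $\mathbb{W}_p$ under common mixtures yields
$$\mathbb{W}_p(\mu_n, \mu_m) \leq \int \mathbb{W}_p\bigl(\delta_{(x_0,i_0)}\mathbf{P}^{\Delta}_n,\, \delta_{(y,j)}\mathbf{P}^{\Delta}_n\bigr)\, \mu_{m-n}(dy \times \{j\}).$$
Lemma \ref{lem:2}, together with the switching-coupling tactic from its Step 2, bounds the integrand by a factor of the form $C(1 + |x_0|^p + |g(x_0,i_0)|^p + |y|^p + |g(y,j)|^p)\, e^{-\gamma_{\Delta} n \Delta}$. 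Using the polynomial growth estimate \eqref{eq:13} together with Lemma \ref{lem:1} (noting that $p < 2$ and also $pq/2 \in (0,2)$) to control these expectations under $\mu_{m-n}$ uniformly in $m-n$, I would obtain $\mathbb{W}_p(\mu_n, \mu_m) \leq C e^{-\gamma_{\Delta} n \Delta}$ with $C$ independent of $m$ and $n$.

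Since the subset of $\mathcal{P}(\mathbb{R}^n \times \mathbb{S})$ with finite $p$-th moment is complete under $\mathbb{W}_p$, a limit $\pi^{\Delta}$ exists, whose $p$-th moment is finite by Fatou's lemma combined with Lemma \ref{lem:1}. Invariance then follows by observing that the same mixture estimate shows $\nu \mapsto \nu \mathbf{P}^{\Delta}_k$ is $\mathbb{W}_p$-continuous on measures of bounded $p$-th moment, so $\pi^{\Delta} \mathbf{P}^{\Delta}_k = \lim_n \mu_{n+k} = \pi^{\Delta}$ for every $k \geq 0$. For uniqueness, if $\pi^{\Delta}_1$ and $\pi^{\Delta}_2$ are both invariant, then
$$\mathbb{W}_p(\pi^{\Delta}_1, \pi^{\Delta}_2) = \mathbb{W}_p(\pi^{\Delta}_1 \mathbf{P}^{\Delta}_k, \pi^{\Delta}_2 \mathbf{P}^{\Delta}_k) \leq \int \int \mathbb{W}_p\bigl(\delta_{(x,i)} \mathbf{P}^{\Delta}_k,\, \delta_{(y,j)} \mathbf{P}^{\Delta}_k\bigr)\, \pi^{\Delta}_1(dx \times \{i\})\, \pi^{\Delta}_2(dy \times \{j\}),$$
and Lemma \ref{lem:2} combined with dominated convergence (justified by the finite $p$-th moments of $\pi^{\Delta}_1$ and $\pi^{\Delta}_2$) forces this to tend to zero as $k \to \infty$, hence $\pi^{\Delta}_1 = \pi^{\Delta}_2$.

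The main obstacle I expect is the careful bookkeeping around the constants from Lemma \ref{lem:2}, which depend polynomially on the initial data through terms such as $|x|^{p}$, $|g(x,i)|^{p}$, $|x|^{pq/2}$, $|g(x,i)|^{pq/2}$. Verifying that these quantities have uniformly bounded expectations under $\mu_{m-n}$ (and subsequently under $\pi^{\Delta}$) requires matching the admissible range $p \in (0, 4/q)$ against the second-moment estimate in Lemma \ref{lem:1} via \eqref{eq:13}, so that all the integrations remain finite. Once this is in place, the exponential rate $\gamma_{\Delta}$ from Lemma \ref{lem:2} transfers directly into the rate in the theorem.
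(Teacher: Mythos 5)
Your proposal is correct and follows essentially the same route as the paper: the same two ingredients (the uniform moment bound of Lemma \ref{lem:1} and the exponential contraction of Lemma \ref{lem:2} combined with the coupling time bound for the switching chain), the same Chapman--Kolmogorov plus mixture-convexity step to get the Cauchy-type estimate $\mathbb{W}_p(\mu_n,\mu_m)\le Ce^{-\gamma_\Delta n\Delta}$, and the identical uniqueness argument via integrating the pointwise contraction against a coupling of the two invariant measures. The only (minor) difference is that you obtain the limit $\pi^\Delta$ from completeness of the Wasserstein space and then explicitly verify its invariance through continuity of $\nu\mapsto\nu\mathbf{P}^\Delta_k$, whereas the paper first extracts a weakly convergent subsequence via tightness and Chebyshev's inequality; your treatment of the invariance of the limit is in fact the more careful of the two.
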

The proof of Theorem \ref{theo:1.2} is quite straightforward. Briefly speaking, for a homogeneous Markov process Lemma \ref{lem:1} guarantees the existence of the numerical invariant measure and Lemma \ref{lem:2} ensures the uniqueness of it. So when Lemmas \ref{lem:1} and \ref{lem:2} are ready to use, the proof of Theorem \ref{theo:1.2} no longer depends on the structure of the numerical method but just follows some standard approaches (see for example, Theorem 3.3 in \cite{BaoShaoYuan2016} or Theorem 3.5 in \cite{LiMaYangYuan2018}). To keep this paper self-contained, we provide the proof as follows.
\begin{proof}
Due to \eqref{eq:2.1} and Chebyshev's inequality, for any initial data $(x_0, i_0)$ we obtain that $\{\delta_{(x_0, i_0)}\mathbf{P}^{\Delta}_{k}\}$ is tight. Then a subsequence can be extracted which converges weakly to an invariant measure  $\pi^{\Delta} \in \mathcal{P}(\mathbb{R}^n \times \mathbb{S})$. From \eqref{eq:2.3} and \eqref{eq:2.2}, we can get 
\begin{align}
W_p\left(\delta_{(x_0,i_0)} \mathbf{P}_{k}^\Delta, \delta_{(y_0,j_0)} \mathbf{P}_{k}^\Delta\right) 
&\leq \mathbb{E}\lvert X_k^{x_0,i_0} - X_k^{y_0,j_0}\rvert^p + \mathbb{P}\left(r_k^{i_0} \neq r_k^{j_0}\right) \notag\\
&\leq K e^{- \gamma k \Delta} + e^{-\theta k \Delta} \notag\\
&\leq K e^{- \gamma_{\Delta} k\Delta},
\label{eq:2.5}
\end{align}
where $\gamma_{\Delta} = \min\{\gamma, \theta\}$. Thanks to Assumption \ref{PolyCon}, Lemma \ref{lem:1} and the Kolmogorov-Chapman equation, for any $k, l >0$ and $p \in (0,1)$, yields that
\begin{align}
&W_p(\delta_{(x_0,i_0)}\mathbf{P}_{k}^\Delta, \delta_{(x_0,i_0)}\mathbf{P}_{k+l}^\Delta) 
= W_p(\delta_{(x_0,i_0)}\mathbf{P}_{k}^\Delta, \delta_{(x_0,i_0)} \mathbf{P}_{k}^\Delta\mathbf{P}_{l}^\Delta) \notag \\
&\leq \int_{\mathbb{R}^n \times \mathbb{S}} W_p(\delta_{(x_0,i_0)}\mathbf{P}_{k}^\Delta, \delta_{(y_0,j_0)}\mathbf{P}_{k}^\Delta) \mathbf{P}_{l}^\Delta((x_0, i_0), dy_0 \times \{j\}) \notag \\
&\leq \sum_{j \in \mathbb{S}} \int_{\mathbb{R}^n} Ke^{-\gamma_{\Delta} k\Delta} \mathbf{P}_{l}^\Delta((x_0, i_0), dy_0\times \{j\}) \notag \\
&= C\left(|x_0|^p + \mathbb{E}\lvert X_l^{x_0,i_0} \rvert^p + | g(x_0, i_0) |^p + \mathbb{E}| g(X_l^{x_0,i_0}, r_l^{i_0}) |^p\right)e^{-\gamma_{\Delta} k\Delta} \notag\\
&\leq C e^{-\gamma_{\Delta} k\Delta}.\notag
\end{align}
Then, let $l \to \infty$, we have
\begin{align*}
W_p(\delta_{(x_0,i_0)}\mathbf{P}_{k}^\Delta, \pi^\Delta) 
\to 0, \quad k \to \infty,
\end{align*}
where $ \pi^\Delta $ is the invariant measure of $ \{\delta_{(x_0,i_0)}\mathbf{P}_{k}^\Delta\} $. To investigate the uniqueness of the invariant measure, we assume $ \pi_1^\Delta, \pi_2^\Delta \in \mathcal{P}(\mathbb{R}^n \times \mathbb{S}) $ are respectively the invariant measures of $ (X_k^{x_0,i_0}, r_k^{i_0}) $ and $ (X_k^{y_0,j_0}, r_k^{j_0}) $. Thus we have
\begin{align*}
W_p(\pi_1^\Delta, \pi_2^\Delta) &\leq \int_{(\mathbb{R}^n \times \mathbb{S}) \times (\mathbb{R}^n \times \mathbb{S})} W_p(\delta_{(x_0,i_0)}\mathbf{P}_{k}^\Delta, \delta_{(y_0,j_0)}\mathbf{P}_{k}^\Delta)\pi(dx_0 \times di_0, dy_0 \times dj_0),
\end{align*}
where $ \pi $ is a coupling of $ \pi_1^\Delta $ and $ \pi_2^\Delta $. Due to \eqref{eq:2.5}, the uniqueness of invariant measures can be derived.
\end{proof}

\subsection{Convergence of numerical invariant measure}\label{subsec:con}
The purpose of studying the numerical invariant measure is to employ it to approximate the underlying counterpart. So we first borrow a result from \cite{BaoShaoYuan2016}, which states the existence and uniqueness of the invariant measure for the underlying equation \eqref{theSDE}.

\begin{theorem} \label{theo:1.1}
Let Assumptions \ref{PolyCon}, \ref{KhCon} and \ref{invCon} hold. The solution of \eqref{theSDE} converges in the Wasserstein distance to a unique invariant measure $\pi$ with some exponential rate $\gamma > 0$.
\end{theorem}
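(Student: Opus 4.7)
The plan is to mirror in continuous time the strategy just carried out for the numerical scheme, since the authors note that Theorem 3.2 can be borrowed from \cite{BaoShaoYuan2016} once the two key a priori estimates are in place. Concretely, I would prove two continuous-time analogues of Lemmas 2.1 and 2.2: first a uniform in time second moment bound $\sup_{t\geq 0}\mathbb{E}|X(t)|^2\leq C(|x_0|^2+|g(x_0,i_0)|^2)$, and second an exponential contraction estimate $\mathbb{E}|X^{x_0,i_0}(t)-X^{y_0,j_0}(t)|^p\leq C e^{-\gamma t}$ for $p\in(0,4/q)$. Then tightness together with a Cauchy argument in the Wasserstein distance $\mathbb{W}_p$ delivers both existence and uniqueness of $\pi$, with exponential convergence rate inherited from the contraction.

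For the uniform moment bound, I would apply It\^o's formula to $|X(t)|^2$ up to the localizing stopping time $\tau_R=\inf\{t\geq 0:|X(t)|>R\}$, use the Khasminskii-type inequality \eqref{Khconext} to get
\begin{equation*}
d|X(t)|^2 \leq \bigl[m+(1+n_{r(t)})|X(t)|^2 - l_2|g(X(t),r(t))|^2\bigr]dt + dM(t),
\end{equation*}
where $M$ is a local martingale. Conditioning on the trajectory of $r(\cdot)$ and iterating the inequality through successive jump intervals of the Markov chain produces a product of factors of the form $1/(1-(n_{r_j}+1)\Delta)$ in discrete time; in continuous time the analogous object is $\exp\bigl(\int_0^t (1+n_{r(s)})\,ds\bigr)$. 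By the ergodic theorem for the finite irreducible chain, $\frac{1}{t}\int_0^t(1+n_{r(s)})\,ds\to\sum_j\mu_j(1+n_j)$ a.s., and Assumption \ref{invCon} ensures this limit is sufficiently negative to give uniform boundedness after taking expectations via Fatou and letting $R\to\infty$.

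For the contraction estimate, I would split as in Lemma \ref{lem:2}. When the initial regime is the same, apply It\^o to $|X^{x_0,i_0}(t)-X^{y_0,i_0}(t)|^2$ and invoke Assumption \ref{KhCon} directly to obtain
\begin{equation*}
d|D(t)|^2 \leq n_{r(t)}|D(t)|^2 dt - (l_1-1)\bigl|g(X^{x_0,i_0}(t),r(t))-g(X^{y_0,i_0}(t),r(t))\bigr|^2 dt + d\tilde M(t),
\end{equation*}
so that by the same ergodic argument (now using the $\lambda_2$ half of Assumption \ref{invCon}) one gets $\mathbb{E}|D(t)|^2\leq C e^{-\lambda_2 t}(|x_0-y_0|^2+|g(x_0,i_0)-g(y_0,i_0)|^2)$. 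When the regimes differ, introduce the coupling time $\tau=\inf\{t\geq 0: r^{i_0}(t)=r^{j_0}(t)\}$, use the finite-state irreducibility of $\Gamma$ to obtain $\mathbb{P}(\tau>t)\leq e^{-\theta t}$, and then split $\mathbb{E}|X^{x_0,i_0}(t)-X^{y_0,j_0}(t)|^p$ over $\{\tau>t/2\}$ and $\{\tau\leq t/2\}$. On the first event H\"older's inequality combined with the moment bound controls the expectation by $e^{-(2-p)\theta t/4}$ times a constant; on the second event the strong Markov property at time $\tau$ reduces matters to the same-regime contraction just established, at the price of a moment bound for $X^{x_0,i_0}(\tau\wedge t/2)$, which is provided by the first step.

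Finally, from these two ingredients the existence and uniqueness of $\pi$ follow by the standard argument: the contraction implies $\mathbb{W}_p(\delta_{(x_0,i_0)}\mathbf{P}_{t+s},\delta_{(x_0,i_0)}\mathbf{P}_t)\leq C e^{-\gamma t}$, so $\{\delta_{(x_0,i_0)}\mathbf{P}_t\}_{t\geq 0}$ is Cauchy and its limit $\pi$ is invariant by continuity of the semigroup, while uniqueness is immediate from the same contraction applied to any two invariant measures. The main obstacle I anticipate is the first step: because $f,g$ are only locally Lipschitz with superlinear growth, the It\^o expansion must be done with care through $\tau_R$, and the random Markovian coefficient $n_{r(t)}$ forces one to handle the exponential $\exp\bigl(\int_0^t(1+n_{r(s)})\,ds\bigr)$ via a large-deviations/Fatou combination rather than a pathwise comparison — the same subtlety that drove the $H_1,H_2$ split in the proof of Lemma \ref{lem:1}.
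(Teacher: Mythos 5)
The paper does not actually prove this theorem: it is quoted verbatim from \cite{BaoShaoYuan2016}, so there is no in-paper argument to compare against. Your sketch is precisely the standard continuous-time mirror of Lemmas \ref{lem:1} and \ref{lem:2} (uniform second-moment bound via It\^o and the ergodic theorem for $r(\cdot)$, same-regime $L^2$-contraction from Assumption \ref{KhCon}, coupling time $\tau$ with exponential tail to handle different initial regimes, then a Cauchy argument in $\mathbb{W}_p$), which is essentially the route taken in the cited reference, and the logic is sound. Two small points worth tightening: after It\^o's formula the quadratic-variation term contributes $+|g(X(t),r(t))|^2\,dt$, so the correct drift bound is $m+(1+n_{r(t)})|X(t)|^2-(l_2-1)|g(X(t),r(t))|^2$ rather than $-l_2|g(X(t),r(t))|^2$ (harmless since $l_2=l_1-2>2$); and one should note explicitly that Assumption \ref{invCon} implies $\sum_j\mu_j(n_j+1)<0$ and $\sum_j\mu_j n_j<0$, because $u/(1-u/(n_M+2))\geq u$ for all relevant $u$, which is the form of negativity the continuous-time ergodic average actually requires.
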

Now, we present the result on the convergence of the numerical invariant measure to the underlying counterpart below.

\begin{theorem}\label{theo:1.3}
Let Assumptions \ref{PolyCon}, \ref{KhCon} and \ref{invCon} hold. The numerical invariant measure $\pi^\Delta$ converges  to the underlying one $\pi$ in the Wasserstein distance with the rate $1/2$, i.e.  $W_p\left(\pi, \pi^\Delta\right)  \leq C \Delta^{p/2}$.
\end{theorem}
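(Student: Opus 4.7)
The plan is to bound $\mathbb{W}_p(\pi,\pi^\Delta)$ by a triangle inequality through the transient distributions at a large common time. For any integer $k\geq 0$ set $T=k\Delta$, and write
\begin{align*}
\mathbb{W}_p(\pi,\pi^\Delta) \leq \mathbb{W}_p(\pi,\delta_{(x_0,i_0)}\mathbf{P}_T)+\mathbb{W}_p(\delta_{(x_0,i_0)}\mathbf{P}_T,\delta_{(x_0,i_0)}\mathbf{P}^\Delta_k)+\mathbb{W}_p(\delta_{(x_0,i_0)}\mathbf{P}^\Delta_k,\pi^\Delta).
\end{align*}
Theorem \ref{theo:1.1} bounds the first summand by $Ce^{-\gamma T}$ and Theorem \ref{theo:1.2} bounds the third by $Ce^{-\gamma_\Delta T}$, so letting $T\to\infty$ both tails vanish and the whole problem reduces to the finite-time numerical error in the middle summand.

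I would realise the middle term via a synchronous coupling: run the exact solution $X(t)$ and the BEM iterate $X_k$ on the same probability space driven by the same Brownian motion $B(t)$, the same Markov chain path $\{r(\cdot)\}$, and starting from the same initial data $(x_0,i_0)$. Under this coupling the $\mathbb{I}_{\{i\neq j\}}$ component of $d_p$ vanishes and
\begin{align*}
\mathbb{W}_p(\delta_{(x_0,i_0)}\mathbf{P}_T,\delta_{(x_0,i_0)}\mathbf{P}^\Delta_k)\leq\mathbb{E}|X(T)-X_k|^p\leq\bigl(\mathbb{E}|X(T)-X_k|^2\bigr)^{p/2}
\end{align*}
by Jensen's inequality, so it suffices to establish a finite-time strong error bound of the form $\mathbb{E}|X(T)-X_k|^2\leq C\Delta$ with $C$ independent of $T$. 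Passing $T\to\infty$ in the triangle inequality then gives $\mathbb{W}_p(\pi,\pi^\Delta)\leq C\Delta^{p/2}$ as claimed.

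To obtain the strong error I would set $e_k:=X(t_k)-X_k$ and mimic the recursion used in the proof of Lemma \ref{lem:2}. Applying \eqref{eq:3} to $|e_{k+1}|^2-|e_k|^2+|e_{k+1}-e_k|^2$ and taking $\mathcal{G}_k$-conditional expectations, the drift difference $f(X(t_{k+1}),r_{k+1})-f(X_{k+1},r_{k+1})$ is controlled through Assumption \ref{KhCon} evaluated at state $r_{k+1}$, yielding the coefficient $(1-n_{r_{k+1}}\Delta)$ in front of $\mathbb{E}(|e_{k+1}|^2\mid\mathcal{G}_k)$ which is positive by $\Delta<1/(n_M+2)$; the diffusion cross-term is absorbed by the same Khasminskii bound. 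The remaining contributions are local truncation pieces arising from comparing continuous-time integrals over $[t_k,t_{k+1}]$ with their frozen discrete analogues, and are handled through Assumption \ref{PolyCon} together with uniform-in-time $L^q$ moment bounds on $X(t)$ and $\{X_k\}$; the latter is supplied by Lemma \ref{lem:1}, the former by an entirely parallel continuous-time estimate under Assumptions \ref{KhCon}--\ref{invCon}. Iterating in the style of \eqref{eq:20}--\eqref{eq:21} and invoking Assumption \ref{invCon} to average the switching factor then yields $\mathbb{E}|e_k|^2\leq C\Delta$ with $C$ independent of $k$.

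The main obstacle is proving this strong error bound \emph{uniformly in the time horizon}. Because Assumption \ref{PolyCon} permits both $f$ and $g$ to grow superlinearly, the usual Gronwall-based Euler analysis produces constants of size $e^{cT}$, which when combined with the exponential tails in the triangle inequality would degrade the convergence rate below $\Delta^{p/2}$. The only way to recover the clean rate stated is to genuinely exploit the Khasminskii dissipativity of Assumption \ref{KhCon} together with the ergodic averaging of the switching coefficient enforced by Assumption \ref{invCon}, in exactly the way that was used to prove Lemmas \ref{lem:1} and \ref{lem:2}. A secondary technicality is that $X_{k+1}$ depends on $r_{k+1}$ through the implicit drift evaluation, so the conditional-expectation manipulations with respect to $\mathcal{G}_k$ and $\mathcal{F}^r_k$ developed throughout Section \ref{subsec:nim} must be reused carefully when bounding the mixed terms that appear in the strong error recursion.
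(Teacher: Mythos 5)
Your proposal follows the paper's own route exactly: the same three-term triangle inequality through the transient laws $\delta_{(x_0,i_0)}\mathbf{P}_{k\Delta}$ and $\delta_{(x_0,i_0)}\mathbf{P}_{k}^\Delta$, the two tails controlled by Theorems \ref{theo:1.1} and \ref{theo:1.2}, and the whole burden shifted onto a time-uniform finite-time estimate $W_p\left(\delta_{(x_0,i_0)}\mathbf{P}_{k\Delta},\delta_{(x_0,i_0)}\mathbf{P}_{k}^\Delta\right)\leq C\Delta^{p/2}$, with $k$ then taken of order $|\ln\Delta|/\Delta$ so that the exponential tails are themselves $O(\Delta^{p/2})$. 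The only place you diverge is that the paper does not prove this middle estimate at all --- it appeals to the proof of Theorem 4.2 in \cite{LiuMaoWu2023} --- whereas you sketch a direct synchronous-coupling derivation of it.

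Two concrete points in that sketch would need repair before it could stand in for the citation. First, the uniform-in-time higher moments you invoke are not ``supplied by Lemma \ref{lem:1}'': that lemma bounds only $\mathbb{E}|X_k|^2$, while the local truncation terms generated by Assumption \ref{PolyCon} with $q>2$ (e.g.\ $q=6$ in Example \ref{ex:1}) involve moments of the solution of order at least $q$, and no uniform-in-time bound of that order is established anywhere in the paper for either $X(t)$ or $X_k$; proving one (under a strengthened version of \eqref{Khconext}) is a substantive piece of work in its own right. Second, your coupling freezes the chain at $r_k$ (diffusion) and $r_{k+1}$ (drift) over $[t_k,t_{k+1}]$ while the exact solution follows $r(t)$, which may jump inside the interval; on that event the coefficient mismatch is $O(1)$ and the event has probability $O(\Delta)$, and it is this contribution --- which your sketch never mentions --- rather than the truncation error that caps the mean-square error at $O(\Delta)$ and hence the Wasserstein rate at $\Delta^{p/2}$. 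Neither issue invalidates your plan, but together they are precisely the content of the step that the paper itself leaves to the external reference.
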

\begin{proof}
Thanks to the triangle inequality, we have
\begin{align*}
W_p\left(\pi, \pi^\Delta\right) &\leq W_p\left(\delta_{(x_0,i_0)}\mathbf{P}_{k\Delta}, \pi\right) + W_p\left(\delta_{(x_0,i_0)}\mathbf{P}_{k\Delta}, \delta_{(x_0,i_0)}\mathbf{P}_{k}^\Delta\right)+ W_p\left(\delta_{(x_0,i_0)}\mathbf{P}_{k}^\Delta, \pi^\Delta\right).
\end{align*}
Due to Theorems \ref{theo:1.2} and  \ref{theo:1.1}, for any $\Delta \in (0,1/(n_M+2))$, there exists a sufficiently large positive integer $k$ such that
\begin{align*}
W_p\left(\delta_{(x_0,i_0)}\mathbf{P}_{k\Delta}, \pi\right) + W_p\left(\delta_{(x_0,i_0)}\mathbf{P}_{k}^\Delta, \pi^\Delta\right) \leq C e^{-\gamma^* k\Delta}, 
\end{align*}
where $\gamma^* := \gamma \wedge \gamma_{\Delta}$. Following the similar way of the proof of Theorem 4.2 in \cite{LiuMaoWu2023}, there exists a $\Delta^*>0$ such that for $\Delta \in (0,\Delta^*)$ 
\begin{align*}
W_p\left(\delta_{(x_0,i_0)}\mathbf{P}_{k\Delta}, \delta_{(x_0,i_0)}\mathbf{P}_{k}^\Delta\right) \leq C  \Delta^{\frac{p}{2}}.
\end{align*}
Finally, we can choose $N= [- (p ln \Delta)/(2\gamma^*\Delta)]$. Then, for any $k > N$,  we have 
\begin{align*}
W_p\left(\pi, \pi^\Delta\right) \leq C \Delta^{\frac{p}{2}}+C e^{-\gamma^* k\Delta} \leq C \Delta^{\frac{p}{2}}.
\end{align*}
The proof is complete.

\end{proof}

%

\section{Numerical simulations}\label{sec:ns}
A scalar SDE with Markovian switching is employed as an example to demonstrate the theoretical results empirically in this section.
\begin{example}\label{ex:1}
For a Markov process $\{r(t)\}_{t \geq 0}$ taking values in $\mathbb{S} = \{1, 2\}$ with the transition rate matrix 
$$\Gamma = \begin{pmatrix} -4 & 4 \\ 1 & -1 \end{pmatrix},$$
we consider a hybrid SDE \eqref{theSDE} with 
$$f(x,1)=1+x-x^3, g(x,1)=x^2~\text{and}~f(x,2)=1-2x-3x^3, g(x,2)=-x^2.$$
Set initial data $X(0) = 2$ and $r(0) = 1$. 
\end{example}

Firstly, for the equation described in Example \ref{ex:1} we check if all the required assumptions are obeyed.
\par
Due to $x^3-y^3=(x-y)(x^2+xy+y^2)$ and $x^2+xy+y^2\le \frac{3}{2}(|x|^2+|y|^2)$, for any $i \in \mathbb{S}$ we have  
\begin{equation*}
|f(x,i)-f(y,i)|
\le C\,|x-y|\left(1+|x|^2+|y|^2\right),
\end{equation*}
where $C>0$ is a constant. In addition, for any $i \in \mathbb{S}$ we have
\begin{equation*}
|g(x,i)-g(y,i)|
=|x^2-y^2|
=|x-y||x+y|
\le C\,|x-y|\sqrt{|x|^2+|y|^2}.
\end{equation*}
Hence Assumption \ref{PolyCon} holds with $q=6$.
\par
By choosing $l_1=5$, we have
\begin{align*}
&2(x-y)\bigl(f(x,1)-f(y,1)\bigr)+l_1|g(x,1)-g(y,1)|^2
\le 2|x-y|^2,    
\end{align*}
and
\begin{align*}
&2(x-y)\bigl(f(x,2)-f(y,2)\bigr)+l_1|g(x,2)-g(y,2)|^2
\le -4|x-y|^2.    
\end{align*}
That is, Assumption \ref{KhCon} holds with $n_1=2$ and $n_2=-4$. It can be derived from the $\Gamma$ that the stationary distribution of the Markov chain is $\mu=(0.2,0.8)$. Thus, Assumption \ref{invCon} is satisfied.
\par
As all the requirements for those theorems in Section \ref{sec:MRs} is satisfied, we can conclude that for the numerical solution generated by the BEM method has a unique invariant measure, which, moreover, converges to the underlying counterpart.
\par
Now we conduct some numerical simulations to display them. We set the time step-size $\Delta =0.01$.
\par
The left plot of Figure \ref{fig:r} depicts a sample path of the Markov
chain $r(t)$ and the right plot shows the visits to each state of that sample path.
\begin{figure}[ht]  
    \centering  
    \begin{minipage}[b]{0.49\textwidth}
        \centering        \includegraphics[width=\textwidth]{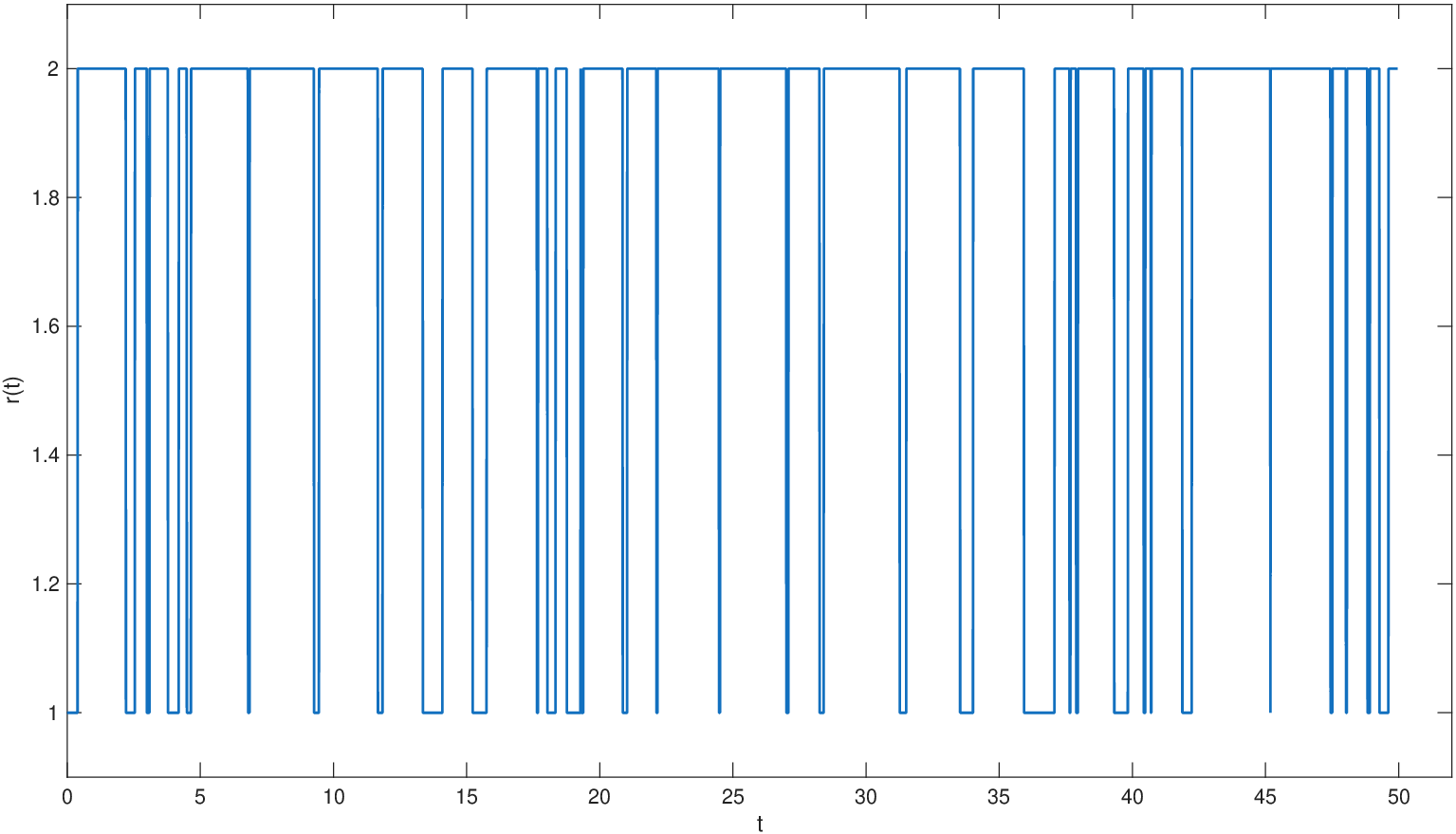}
    \end{minipage}
    \hfill
    \begin{minipage}[b]{0.49\textwidth}
        \centering        \includegraphics[width=\textwidth]{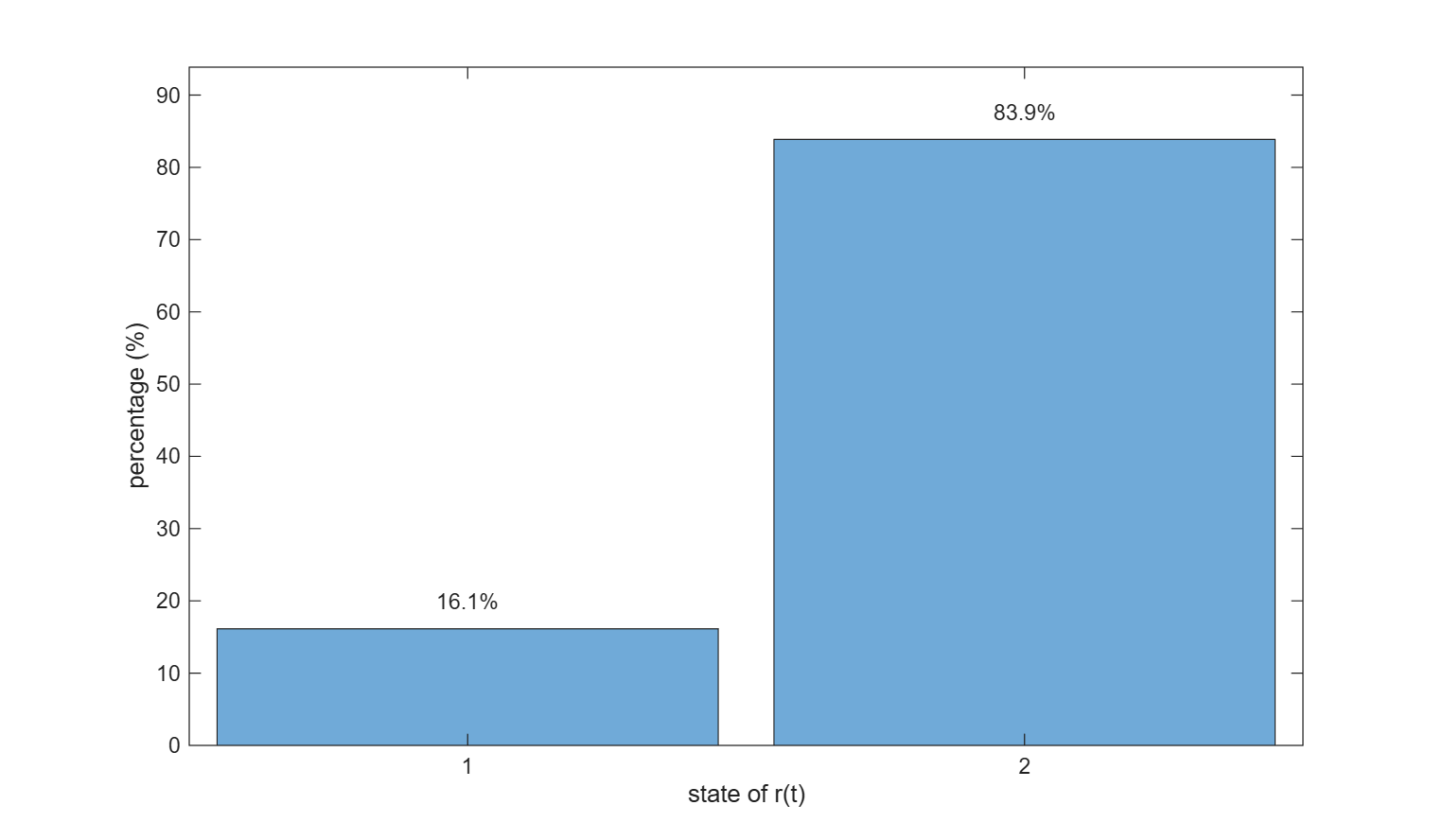}
    \end{minipage}
    \caption{Left: one sample path of $r(t)$. Right: proportions of visits to each state for one path of $r(t)$  }  
      \label{fig:r}
\end{figure}

1000 sample paths of the equation in Example \ref{ex:1} are simulated. The empirical densities at different time points are displayed in the left plot of Figure \ref{fig:main}. At $t = 0.1$, $t = 0.3$, and $t = 0.5$, the shapes of the empirical density functions are notably distinct. However, when we look at $t = 20$ and $t = 30$, the empirical density functions become much more similar. This similarity implies the existence of an invariant measure for the system. 
\par
In the right plot of Figure  \ref{fig:main}, the empirical densities at $t=40$ for different initial value $X(0)=-5,5,15$ are drawn. The virtual identity of those three curves illustrates the uniqueness of the invariant measure.

\begin{figure}[htbp]
    \centering
    \begin{minipage}[b]{0.49\textwidth}
        \centering        \includegraphics[width=\textwidth]{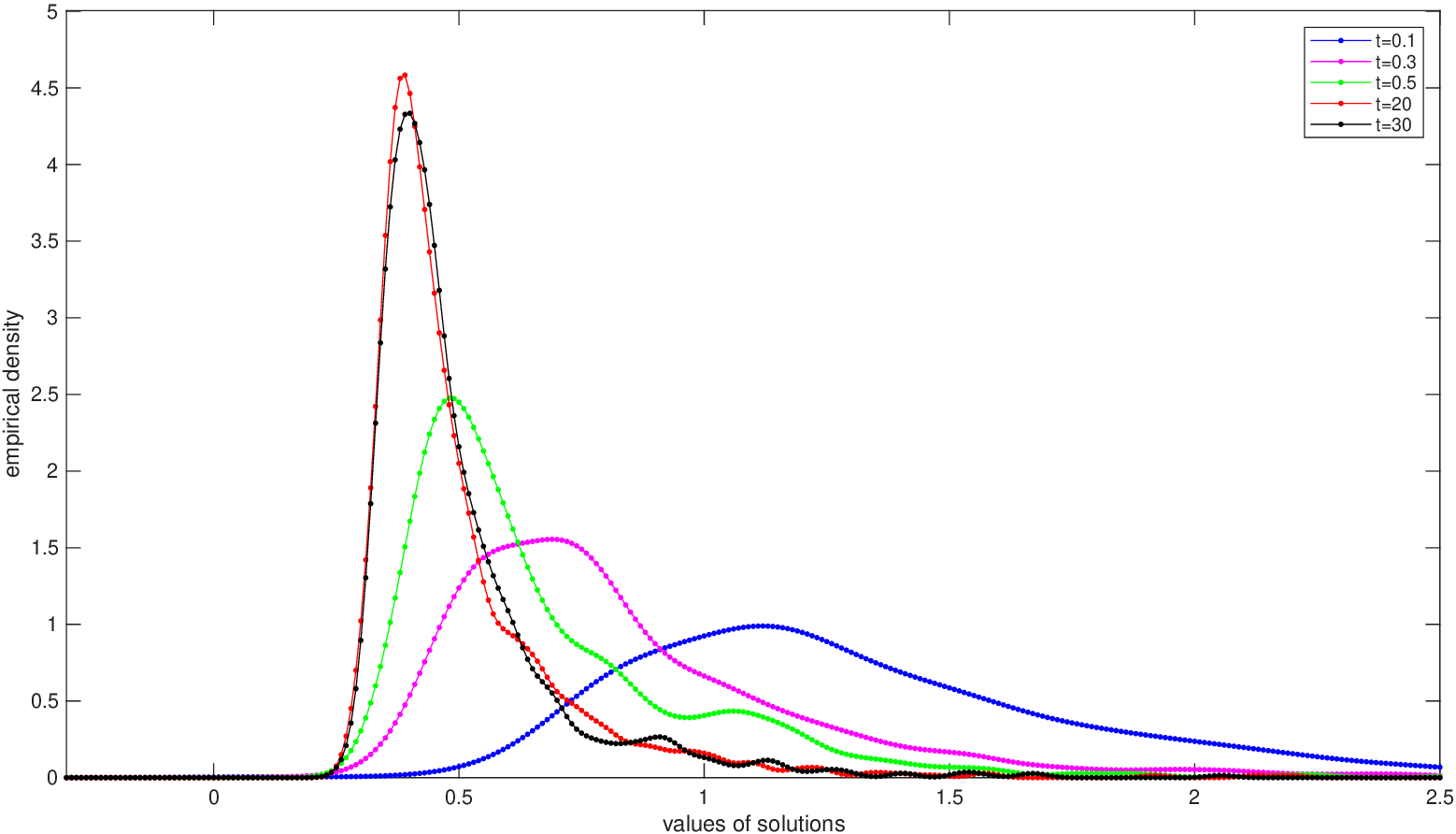}
    \end{minipage}
    \hfill
    \begin{minipage}[b]{0.49\textwidth}
        \centering
        \includegraphics[width=\textwidth]{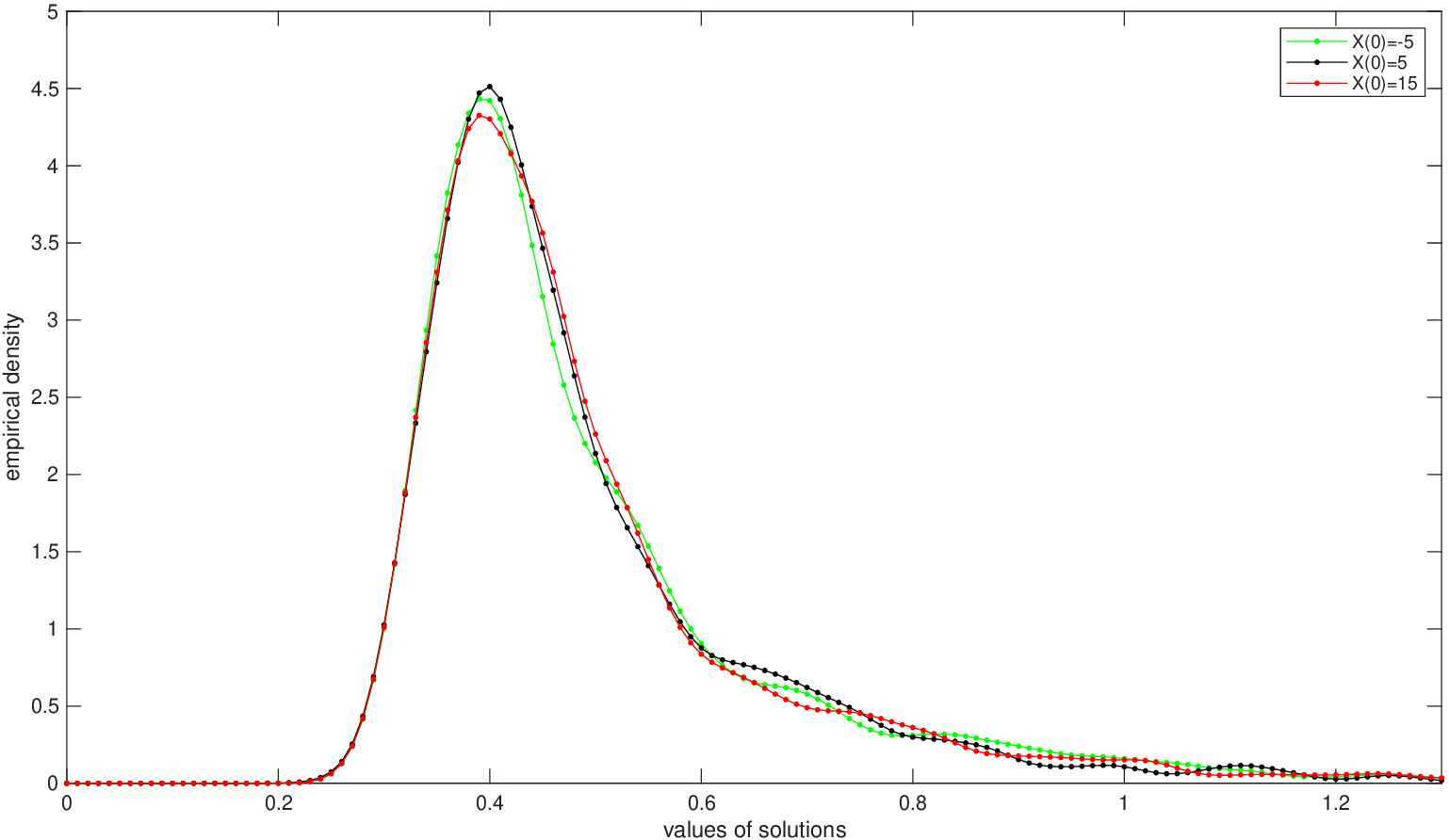}
    \end{minipage}
    \caption{Left: empirical density functions at different time points. Right: empirical density functions at t=40 for SDEs with different initial values.}
    \label{fig:main}
\end{figure}

To quantitatively measure the similarity of those empirical density functions, we apply two-sample Kolmogorov-Smirnov (K-S) tests between samples of solutions at consecutive time points $t_i=i\Delta$ and $t_{i+1}=(i+1)\Delta$ for $i=0,\cdots,200$.
\par
From the top plot in Figure \ref{fig:2}, the K-S statistics drop to near zero quite fast and remain small. There indicate that the empirical density functions converge to the invariant one rapidly. 
\par
The bottom plot in Figure \ref{fig:2} shows that $p$-values rise quickly and stay much larger 0.05 for rest of the time. These provide the strong confidence that samples of solutions at consecutive time points come from the same distribution.

\begin{figure}[ht]  
    \centering    \includegraphics[width=0.75\textwidth]{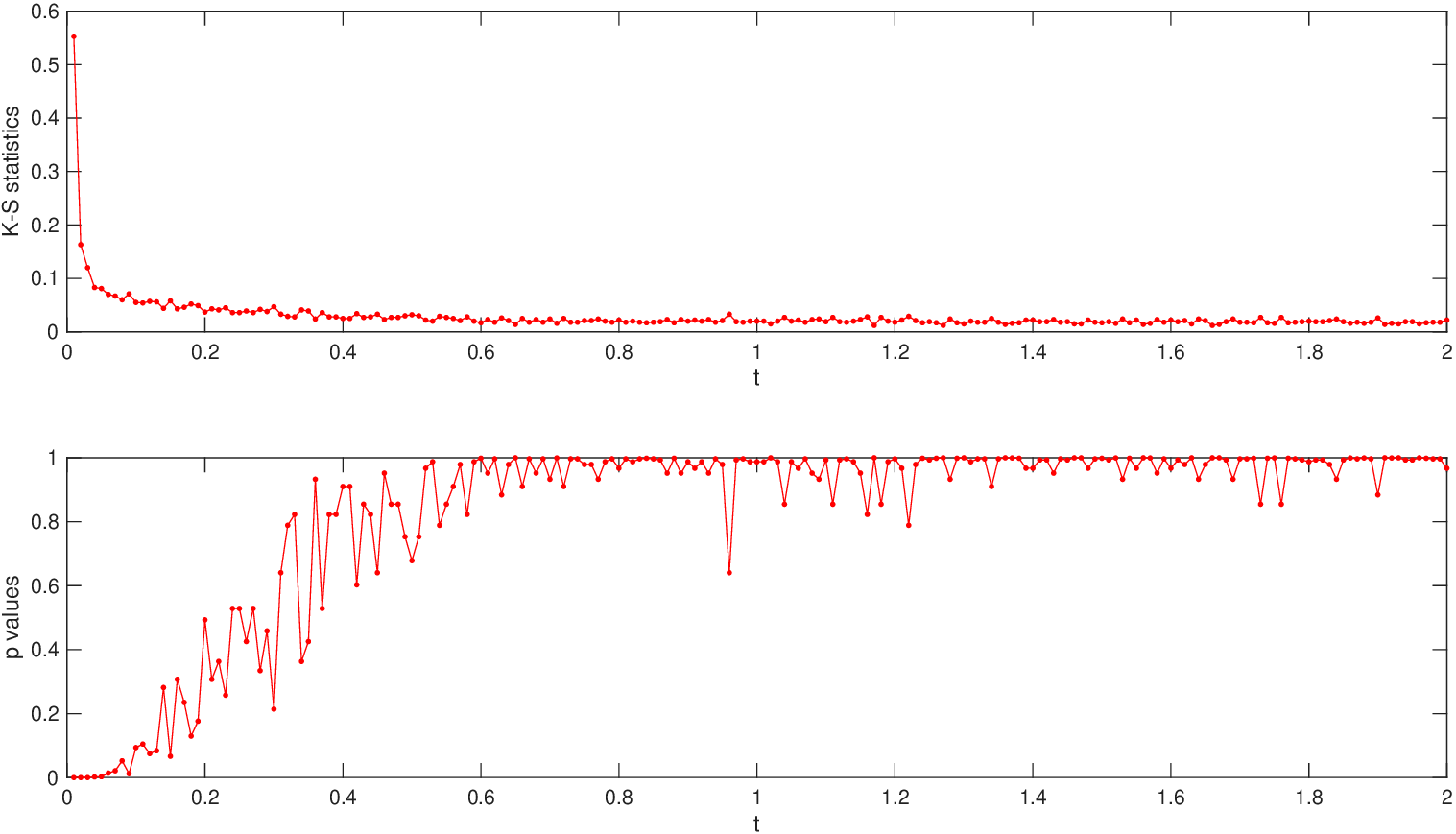}  
    \caption{K-S tests for samples at consecutive time points.}  
    \label{fig:2}  
\end{figure}

\section{Conclusion and future works}
In this paper, the BEM method is employed to approximate the invariant measure of SDE with Markovian switching. Both the drift and the diffusion coefficients of the underlying equation are allowed to contain super-linear terms. Compared with existing works, our results release the constraint of the global Lipschitz condition on the diffusion coefficient.  
\par
As the invariant measure can also be derived from some partial differential equation that itself also needs some numerical treatments, our results in this work, on the other way around, provide some possibility to numerically solve those partial differential equations via some stochastic algorithms. Therefore, this is one of the future works we are thinking about.

\bibliography{RefLists}

\end{document}